\documentclass[11pt,reqno]{amsart}

\setlength{\textwidth}{6.3in} \setlength{\textheight}{9.25in}
\setlength{\evensidemargin}{0in} \setlength{\oddsidemargin}{0in}
\setlength{\topmargin}{-.3in}



\usepackage{amssymb}
\usepackage{amsmath,amsthm,amsfonts,amssymb,latexsym,mathrsfs,color,hyperref}
\usepackage{graphicx}
\usepackage{xspace}
\usepackage{multirow}
\usepackage{diagbox}
\usepackage{capt-of}
\usepackage{tikz}
\tikzset{
	level 1/.style = {sibling distance = 1.5cm},
	level 2/.style = {sibling distance = 0.8cm},
    level distance = 0.9 cm
}
\usetikzlibrary {decorations.pathmorphing}
\usetikzlibrary{matrix, positioning}
\tikzstyle{snakeline} = [decorate, decoration={snake, amplitude=.4mm, segment length=2mm}]

\usepackage{tikz-cd}
\usepackage[boxsize=1.3em, centerboxes]{ytableau}

\usepackage{tikz-qtree}
\tikzset{every tree node/.style={minimum width=0.1cm,draw,circle},
         blank/.style={draw=none},
         edge from parent/.style=
         {draw,edge from parent path={(\tikzparentnode) -- (\tikzchildnode)}},
         level distance=0.8cm}

\newtheorem{theorem}{Theorem}
\newtheorem{corollary}[theorem]{Corollary}
\newtheorem{proposition}[theorem]{Proposition}

\newtheorem{lemma}[theorem]{Lemma}
\newtheorem{definition}[theorem]{Definition}
\newtheorem{example}[theorem]{Example}
\newtheorem{problem}[theorem]{Problem}

\newtheorem*{LSp}{Label Schema}

\newcommand{\YWCT}{{\rm YWCT\,}}
\newcommand{\SYT}{{\rm SYT\,}}

\newcommand{\even}{{\rm even\,}}
\newcommand{\negg}{{\rm neg\,}}

\newcommand{\redu}{{\rm red\,}}

\newcommand{\plat}{{\rm plat\,}}
\newcommand{\ap}{{\rm ap\,}}
\newcommand{\lap}{{\rm lap\,}}

\newcommand{\single}{{\rm single\,}}

\newcommand{\des}{{\rm des\,}}

\newcommand{\exc}{{\rm exc\,}}

\newcommand{\aexc}{{\rm aexc\,}}

\newcommand{\cyc}{{\rm cyc\,}}

\newcommand{\fix}{{\rm fix\,}}

\newcommand{\msn}{\mathfrak{S}_n}

\newcommand{\ms}{\mathfrak{S}}

\newcommand{\mq}{\mathcal{Q}}
\newcommand{\mqn}{\mathcal{Q}_n}

\newcommand{\asc}{{\rm asc\,}}

\newcommand{\Eulerian}[2]{\genfrac{<}{>}{0pt}{}{#1}{#2}}
\newcommand{\Stirling}[2]{\genfrac{\{}{\}}{0pt}{}{#1}{#2}}
\newcommand{\stirling}[2]{\genfrac{[}{]}{0pt}{}{#1}{#2}}
\newcommand{\arxiv}[1]{\href{http://arxiv.org/abs/#1}{\texttt{arXiv:#1}}}
\linespread{1.25}

\title{Eulerian-type polynomials over Stirling permutations and box sorting algorithm}
\author[S.-M.~Ma]{Shi-Mei Ma}
\address{School of Mathematics and Statistics, Shandong University of Technology, Zibo 255000, Shandong, P.R. China}
\email{shimeimapapers@163.com (S.-M. Ma)}
\author[J.-Y.~Liu]{Jun-Ying Liu}
\address{School of Mathematics and Statistics, Shandong University of Technology, Zibo, Shandong 255000, P.R. China}
\email{jyliu6@163.com (J.-Y. Liu)}
\author{Jean Yeh}
\address{Department of Mathematics, National Kaohsiung Normal University, Kaohsiung 82444, Taiwan}
\email{chunchenyeh@nknu.edu.tw (J. Yeh)}
\author[Y.-N. Yeh]{Yeong-Nan Yeh}
\address{College of Mathematics and Physics, Wenzhou University, Wenzhou 325035, P.R. China}
\email{mayeh@math.sinica.edu.tw (Y.-N. Yeh)}
\subjclass[2010]{Primary 05A19; Secondary 05E05}
\begin{document}

\maketitle
\begin{abstract}
It is well known that ascents, descents and plateaux are equidistributed over the set of classical Stirling permutations. 
Their common enumerative polynomials are the second-order Eulerian polynomials, which have been extensively studied by many researchers.
This paper is divided into three parts. The first parts gives a convolution formula 
for the second-order Eulerian polynomials, which simplifies a result of Gessel.  
As an application, a determinantal expression for the second-order Eulerian polynomials is obtained. 
We then investigate the convolution formula of the trivariate second-order Eulerian polynomials. 
Among other things, by introducing three new statistics: proper ascent-plateau, 
improper ascent-plateau and trace, we discover that a six-variable Eulerian-type polynomial over a class of restricted Stirling permutations equals a 
six-variable Eulerian-type polynomial over signed permutations. 
By special parametrizations, we make use of Stirling permutations to give a unified interpretations of the $(p,q)$-Eulerian polynomials 
and derangement polynomials of types $A$ and $B$. The third part presents a box sorting algorithm which leads to a bijection between
the terms in the expansion of $(cD)^nc$ and ordered weak set partitions,
where $c$ is a smooth function in the indeterminate $x$ and $D$ is the derivative with respect to $x$.
Using a map from ordered weak set partitions to standard Young tableaux, we find an expansion of
$(cD)^nc$ in terms of standard Young tableaux. Combining this with grammars, we provide three interpretations of the second-order Eulerian polynomials.

\bigskip

\noindent{\sl Keywords}: Stirling permutations; Signed permutations; Proper ascent-plateaux; Improper ascent-plateau; Box sorting algorithm; Standard Young tableaux
\end{abstract}
\date{\today}
\tableofcontents
\section{Introduction}
Many polynomials can be generated by successive differentiations of a given base function, see~\cite[Section~4.3]{Hwang20} for a survey.
Here we provide two such polynomials.
The Eulerian polynomials $A_n(x)$ first appearance in the following series summation or successive differentiation:
\begin{equation}\label{Anx-poly-def}
\sum_{k=0}^\infty k^nx^k=\left(x\frac{\mathrm{d}}{\mathrm{d}x}\right)^n\frac{1}{1-x}=\frac{A_n(x)}{(1-x)^{n+1}}.
\end{equation}
The {\it second-order Eulerian polynomials} $C_n(x)$ can be defined by
\begin{equation}\label{Cnx-poly-def}
\sum_{k=0}^\infty \Stirling{n+k}{k}x^k=\left(\frac{x}{1-x}\frac{\mathrm{d}}{\mathrm{d}x}\right)^n\frac{x}{1-x}=\frac{C_n(x)}{(1-x)^{2n+1}},
\end{equation}
where $\Stirling{n}{k}$ is the {\it Stirling numbers of the second kind}, i.e., the number of partitions of $[n]:=\{1,2,\ldots,n\}$ into exactly $k$ blocks, see~\cite{Buckholtz,Carlitz65}.
The polynomials $A_n(x)$ and $C_n(x)$ share several similar properties, including recursions~\cite{Gessel78}, real-rootedness~\cite{Liu07} and combinatorial expansions~\cite{Chen22}.
For example, we have
\begin{equation*}\label{Eulerian01}
A_{n}(x)=nxA_{n-1}(x)+x(1-x)\frac{\mathrm{d}}{\mathrm{d}x}A_{n-1}(x),~A_0(x)=1;
\end{equation*}
\begin{equation}\label{Cnx-recu}
C_{n+1}(x)=(2n+1)xC_n(x)+x(1-x)\frac{\mathrm{d}}{\mathrm{d}x}C_n(x),~C_0(x)=1.
\end{equation}

This paper is motivated by the following problem.
\begin{problem}\label{problem0}
 If there exists a function $f_n(x_1,x_2,\ldots,x_n)$ such that 
 $$C_n(x)=f_n\left(A_1(x),A_2(x),\ldots,A_n(x)\right).$$
\end{problem}

For $\mathbf{m}=(m_1,m_2,\ldots,m_n)\in \mathbb{N}^n$, let $\mathbf{n}=\{1^{m_1},2^{m_2},\ldots,n^{m_n}\}$ be a multiset,
where the element $i$ appears $m_i$ times. A {\it multipermutation} of $\mathbf{n}$ is a sequence of its elements.
For any word over $\mathbf{n}$, we say that the {\it reduced
form} of $w$, written $\redu(w)$, is equal to the word obtained by replacing each of the
occurrences of the $i$th smallest number in $w$ with the number $i$. 
Denote by $\ms_{\mathbf{n}}$ the set of multipermutations of $\mathbf{n}$.
We say that a multipermutation $\sigma$ of $\mathbf{n}$ is a {\it Stirling permutation} if for each $i$, $1\leqslant i\leqslant n$, 
all letters occurring between the two occurrences of $i$ are at least $i$.
Denote by $\mq_\mathbf{n}$ the set of Stirling permutations of $\mathbf{n}$.
When $m_1=\cdots=m_n=1$, the set $\mq_\mathbf{n}$ reduces to the symmetric group $\msn$. When $m_1=m_2=\cdots=m_n=2$,
the set $\mq_\mathbf{n}$ reduces to the set $\mq_n$ (the set of classical Stirling permutations), which is defined by Gessel-Stanley~\cite{Gessel78} when they considering~\eqref{Cnx-poly-def}.
Except where explicitly stated, we always assume that all Stirling permutations belong to $\mqn$.
For example, $\mq_1=\{11\},~\mq_2=\{1122,1221,2211\}$.

For $\sigma\in\mq_{\mathbf{n}}$, any entry $\sigma_{i}$  is called an {\it ascent} (resp.~{\it descent},~{\it plateau}) if $\sigma_{i}<\sigma_{i+1}$ (resp.~$\sigma_{i}>\sigma_{i+1}$,~$\sigma_{i}=\sigma_{i+1}$), where $i\in \{0,1,2,\ldots,m_1+m_2+\cdots+m_n\}$ and we set $\sigma_0=\sigma_{m_1+m_2+\cdots+m_n+1}=0$.
Let $\asc(\sigma)$ (resp.~$\des(\sigma)$,~$\plat(\sigma)$) be the number of ascents, descents and plateaux of $\sigma$.
The {\it Eulerian polynomials} can also be defined by $$A_n(x)=\sum_{\pi\in\msn}x^{\des(\pi)}=\sum_{\pi\in\msn}x^{\asc(\pi)}=\sum_{k=0}^n\Eulerian{n}{k}x^k,$$
where $\Eulerian{n}{k}$ are known as the {\it Eulerian numbers}.
When $\sigma\in\mqn$, we always set $\sigma_0=\sigma_{2n+1}=0$. It is well known that $A_n(x)$ is symmetric, see~\cite{Ma19} for instance.
Gessel-Stanley~\cite{Gessel78} found that
$$C_n(x)=\sum_{\sigma\in\mqn}x^{\des(\sigma)}=\sum_{\sigma\in\mqn}x^{\asc(\sigma)}.$$
The trivariate second-order Eulerian polynomials are defined by
\begin{equation*}\label{Cnxyz01}
C_n(x,y,z)=\sum_{\sigma\in\mqn}x^{\asc{(\sigma)}}y^{\des(\sigma)}z^{\plat{(\sigma)}}.
\end{equation*}
The first few $C_n(x,y,z)$ are given as follows:
\begin{align*}
C_0(x,y,z)&=x,~C_1(x,y,z)=xyz,~C_2(x,y,z)=xyz(yz+xy+ xz),\\
C_3(x,y,z)&=xyz(x^2y^2+x^2z^2+y^2z^2+4xy^2z+4x^2yz+4xyz^2).
\end{align*}
Dumont~\cite[p.~317]{Dumont80} discovered that
\begin{equation}\label{Dumont80}
C_{n+1}(x,y,z)=xyz\left(\frac{\partial}{\partial x}+\frac{\partial}{\partial y}+\frac{\partial}{\partial z}\right)C_n(x,y,z),
\end{equation}
which implies that the polynomials $C_n(x,y,z)$ are symmetric in the variables $x,y$ and $z$. It was independently discovered by B\'ona~\cite{Bona08} that
$C_n(x)=\sum_{\sigma\in\mqn}x^{\plat(\sigma)}$.
In recent years, there has been considerable study on Stirling permutations and their variants, see~\cite{Brualdi20,Chen23,Elizalde,Haglund12,Janson11,Kuba21,Lin21,Liu23,Ma19,Ma23}.

Note that each nonempty Stirling permutation $\sigma\in\mqn$ can be represented as
$\sigma'1\sigma''1\sigma'''$, where $\sigma'$, $\sigma''$ and $\sigma'''$ are all Stirling permutations (may be empty).
Clearly, the descent number of $\sigma$ is the sum of the descent numbers of $\sigma'$, $\sigma''$ and $\sigma'''$, 
unless $\sigma'''$ is empty in which case $\sigma$ has an additional descent. Motivated by this observation,
Gessel~\cite{Gessel20} found that
$$\frac{\mathrm{d}}{\mathrm{d}z}C(x;z)=C^2(x;z)(C(x;z)+x-1),$$
where $C(x;z)=\sum_{n=0}^\infty C_n(x)\frac{z^n}{n!}$.
Extracting the coefficient of $\frac{z^n}{n!}$, one can easily deduce that
\begin{equation}\label{Cnx-convolution}
C_{n+1}(x)=x\sum_{k=0}^n\binom{n}{k}C_k(x)C_{n-k}(x)+\sum_{k=0}^{n-1}\binom{n}{k}\left(\sum_{j=0}^k\binom{k}{j}C_{j}(x)C_{k-j}(x)\right)C_{n-k}(x).
\end{equation}
Our point of departure is the following problem.
\begin{problem}\label{P1}
Whether there is a simplified version of~\eqref{Cnx-convolution}?
\end{problem}

A {\it partition} of $n$ is a weakly decreasing sequence of nonnegative integers:
$\lambda=(\lambda_1,\lambda_2,\ldots,\lambda_{\ell})$,
where $\sum_{i=1}^{\ell}\lambda_i=n$. Each $\lambda_i$ is called a {\it part} of $\lambda$.
If $\lambda$ is a partition of $n$, then we write $\lambda\vdash n$.
We denote by $m_i$ the number of parts equals $i$. By using the multiplicities, we also denote $\lambda$ by $(1^{m_1}2^{m_2}\cdots n^{m_n})$.
The {\it length} of $\lambda$, denoted $\ell(\lambda)$, is the maximum subscript $j$ such that $\lambda_j>0$.
The {\it Ferrers diagram} of $\lambda$ is graphical representation of $\lambda$ with $\lambda_i$ boxes in its $i$th row and the boxes are left-justified.
For a Ferrers diagram $\lambda\vdash n$ (we will often identify a partition with its Ferrers
diagram), a {\it standard Young tableau} ($\SYT$, for short) of shape $\lambda$ is a filling of the $n$ boxes of $\lambda$ with the integers $\{1,2,\ldots, n\}$ such that each of those
integers is used exactly once, and all rows and columns are increasing (from left to right, and from bottom to top, respectively),
and we number its rows starting from the bottom and going above.

Let $\operatorname{SYT}_\lambda$ be the set of standard Young tableaux of shape $\lambda$.
Set $\SYT(n)=\bigcup_{\lambda\vdash n}\operatorname{SYT}_\lambda$.
The {\it descent set} for $T\in\SYT(n)$ is defined by
$$\operatorname{Des}(T)=\{i\in[n-1]: i+1~{\text{appears in an upper row of $T$ than $i$}} \}.$$
Denote by $\#V$ the cardinality of a set $V$.
Let $\des(T)=\#\operatorname{Des}(T)$.
The Robinson-Schensted correspondence is a bijection from permutations to pairs of standard Young tableaux of the
same shape. This correspondence and its generalization, the
Robinson-Schensted-Knuth correspondence, have become centrepieces of enumerative and algebraic combinatorics due to their many applications and properties.
Let $f^{\lambda}=\#\operatorname{SYT}_\lambda$.
An application of the Robinson-Schensted correspondence is given as follows:
\begin{equation}\label{AnxSYT01}
A_n(x)=\sum_{\pi\in\msn}x^{\des(\pi)}=\sum_{\lambda\vdash n}f^{\lambda}\sum_{T\in\operatorname{SYT}_\lambda}x^{\des(T)+1},
\end{equation}
which has been extended to skew shapes, see~\cite{Adin19,Huang20} for instances. 

This paper is organized as follows.
In the next section, we present a result concerning Problem~\ref{P1}.
In Section~\ref{Section3}, we investigate the convolution formula for the trivariate second-order Eulerian polynomials $C_n(x,y,z)$.
A deep connection between signed permutations in the hyperoctahedral group and Stirling permutations is established.
In particular, we introduce three new statistics on Stirling permutations: proper ascent-plateau, 
improper ascent-plateau and trace. A special case of Corollary~\ref{corl2} says that
\begin{equation*}
\sum_{\sigma\in \mq_{n+1}^{(1)}}s^{\operatorname{impap}(\sigma)}t^{\operatorname{bk}_2(\sigma)}q^{\operatorname{tr}(\sigma)}
=2^n\sum_{\pi\in\msn}{\left(\frac{t+s}{2}\right)}^{\fix(\pi)}q^{\cyc(\pi)},
\end{equation*}
where ${\mq}_{n}^{(1)}$ is the set of Stirling permutations over the multiset $\{1,2,2,3,3,\ldots,n,n\}$.
In Section~\ref{Section4}, using standard Young tableaux, we provide certain combinatorial expansions of the Eulerian polynomials of types $A$ and $B$ as well as the second-order Eulerian polynomials. In particular, in Theorem~\ref{thm-Cnxy}, we give an affirmative answer to Problem~\ref{problem0}, i.e., 
$$C_n(x)=\sum_{T\in\operatorname{SYT}{(n)}}\prod_{i=1}^n\sigma_i(T){A_i(x)}^{w_i(T)},$$
where $w_i(T)$ is the number of rows in the standard Young tableau $T$ with $i$ elements and $\prod_{i=1}^n\sigma_i(T)$ 
is the $g$-index of $T$ introduced in~\cite{Han23}, see Table~\ref{tab:dummy-c3xy} for an illustration.
\section{Convolution formulas for the second-order Eulerian polynomials}
Since descents and plateaux are equidistributed over Stirling permutations in $\mqn$, it is natural to 
count Stirling permutation by the plateau statistics.
We now present a variant of~\eqref{Cnx-convolution}.
\begin{theorem}\label{lemmaCnx}
For any $n\geqslant 1$, one has
$$C_n(x)=nxC_{n-1}(x)+\sum_{r=1}^{n-1}\binom{n}{n-r+1}C_{n-r}(x)C_{r-1}(x).$$
When $x=1$, it reduces to
$$(2n-1)!!=n\left((2n-3)!!\right)+\sum_{r=1}^{n-1}\binom{n}{n-r+1}\left((2n-2r-1)!!\right)\left((2r-3)!!\right)$$
 \end{theorem}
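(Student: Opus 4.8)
The plan is to deduce the identity from the exponential generating function $C(x;z)=\sum_{n\ge0}C_n(x)\frac{z^n}{n!}$ together with Gessel's differential equation $\frac{\mathrm d}{\mathrm dz}C=C^2(C+x-1)$ recorded above, rather than from the combinatorial decomposition $\sigma'1\sigma''1\sigma'''$ that produced the unwieldy formula~\eqref{Cnx-convolution}.

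First I would reindex to put the asserted identity in a form amenable to generating functions. Writing $j=r-1$ and using $\binom{n}{n-r+1}=\binom nj$, the sum becomes $\sum_{j=0}^{n-2}\binom nj C_j(x)C_{n-1-j}(x)$. Since the missing $j=n-1$ term equals $\binom{n}{n-1}C_{n-1}(x)C_0(x)=nC_{n-1}(x)$, the statement is equivalent to
$$C_n(x)=n(x-1)C_{n-1}(x)+\sum_{j=0}^{n-1}\binom nj C_j(x)C_{n-1-j}(x),\qquad n\ge1.$$

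Next I would translate this into generating functions. Multiplying by $z^n/n!$ and summing over $n\ge1$ turns the left side into $C-1$ and the linear term into $(x-1)zC$; because the two subscripts in the convolution add to $n-1$ rather than $n$, the convolution term becomes $C\int_0^z C\,\mathrm dt$, the antiderivative absorbing the index shift since $\int_0^z C\,\mathrm dt=\sum_{m\ge0}\frac{C_m}{(m+1)!}z^{m+1}$. Thus the identity is equivalent to the functional equation $C\left(1-(x-1)z-\int_0^z C\,\mathrm dt\right)=1$. I expect recognizing that the binomial/index mismatch is precisely an integration—so that the entire statement collapses to this clean functional equation—to be the crux of the argument; everything after it is routine.

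Finally I would verify this functional equation using Gessel's differential equation. Setting $W=C\bigl(1-(x-1)z-\int_0^z C\,\mathrm dt\bigr)$, a direct differentiation using $\frac{\mathrm d}{\mathrm dz}C=C^2(C+x-1)$ and $\frac{\mathrm d}{\mathrm dz}\int_0^z C\,\mathrm dt=C$ gives $W'=C(C+x-1)(W-1)$, while $W(0)=C(x;0)=1$. Hence $V=W-1$ solves a linear homogeneous differential equation $V'=C(C+x-1)V$ with $V(0)=0$, so $V\equiv0$ in the ring of formal power series (comparing coefficients determines each $v_{n+1}$ from the lower ones), i.e.\ $W\equiv1$; extracting the coefficient of $z^n/n!$ then recovers the identity. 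The specialization $x=1$ is immediate: since $C_k(1)=\#\mathcal{Q}_k=(2k-1)!!$, substituting $x=1$ yields exactly the stated double-factorial relation. As an alternative route one could instead derive the identity purely algebraically from~\eqref{Cnx-convolution}, collapsing its nested binomial sums by Vandermonde-type manipulations, but the generating-function argument is shorter and explains why the simpler formula holds.
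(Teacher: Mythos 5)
Your proof is correct, but it takes a genuinely different route from the paper's. The paper argues combinatorially, using that $C_n(x)$ counts Stirling permutations by plateaux: write $\sigma\in\mathcal{Q}_n$ as $i\sigma'i\sigma''$, where $i$ is its first letter. If $\sigma'=\emptyset$, the initial plateau $ii$ contributes $x$, the letter $i$ can be chosen in $n$ ways, and the rest reduces to a permutation counted by $C_{n-1}(x)$, giving $nxC_{n-1}(x)$; if $\sigma'\neq\emptyset$, choosing the $(n-r+1)$-element subset consisting of $i$ (necessarily its minimum) together with the letters of $\sigma'$ gives $\binom{n}{n-r+1}C_{n-r}(x)C_{r-1}(x)$, plateaux being additive over the decomposition. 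Your argument is instead algebraic: after the (correct) reindexing $j=r-1$, you recognize the identity as the coefficientwise form of the functional equation $C\bigl(1-(x-1)z-\int_0^z C\,\mathrm{d}t\bigr)=1$ and verify that equation from Gessel's ODE $\frac{\mathrm{d}}{\mathrm{d}z}C=C^2(C+x-1)$ via a linear first-order equation with zero initial condition; formal power series uniqueness does close the argument. This is sound, and it is arguably the conceptually tightest answer to Problem~\ref{P1}: your functional equation is exactly the integrated form of $\bigl(1/C\bigr)'=-(C+x-1)$, so the simplified convolution is literally Gessel's differential equation integrated once --- indeed you could shortcut your verification of $W\equiv 1$ by writing $1/C=1-(x-1)z-\int_0^z C\,\mathrm{d}t$ directly. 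What your route costs is self-containedness and combinatorial content: it presupposes the quoted ODE (which Gessel obtained from the three-block decomposition $\sigma'1\sigma''1\sigma'''$, the source of the unwieldy formula~\eqref{Cnx-convolution}), whereas the paper's two-case decomposition at the first letter explains structurally why the simpler formula holds and yields the plateau interpretation along the way. The $x=1$ specialization is handled by the same trivial substitution $C_k(1)=(2k-1)!!$ in both.
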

\begin{proof} 
Given a Stirling permutation $\sigma$ in $\mqn$.
Consider a decomposition of $\sigma$: $i\sigma' i\sigma''$, where $\sigma'$ and $\sigma''$ are Stirling permutations (may be empty).
We distinguish two cases:
\begin{itemize}
  \item [$(i)$] If $\sigma$ begins with the plateau $ii$, i.e. $\sigma'=\emptyset$, then there are $n$ choices for $i$. When count the number of plateaux of $\sigma$, this case gives the term 
  $nxC_{n-1}(x)$.
  \item [$(ii)$] When  $\sigma'\neq\emptyset$, choose a multiset $$\{a_1,a_1,a_2,a_2,\ldots,a_{n-r+1},a_{n-r+1}\}$$ in $\binom{n}{n-r+1}$ ways, where $1\leqslant i=a_1<a_2<\cdots<a_{n-r+1}\leqslant n$. Let $S_{n-r}$ be the set of Stirling permutations over $\{a_2,a_2,a_3,a_3,\ldots,a_{n-r+1},a_{n-r+1}\}$, and let $\sigma'\in S_{n-r}$.
If we count the number of plateaux of $\sigma'$, then $S_{n-r}$ contributes to the term $C_{n-r}(x)$. Let $\overline{S}_{r-1}$ be the set of Stirling permutations over  $\{1,1,2,2,\ldots,n,n\}\setminus\{a_1,a_1,a_2,a_2,\ldots,a_{n-r+1},a_{n-r+1}\}$. 
 Then $\redu(\sigma'')\in\mq_{r-1}$ and $\overline{S}_{r-1}$ contributes to the term $C_{r-1}(x)$. 
\end{itemize}
The aforementioned two cases exhaust all the possibilities, and this completes the proof.
\end{proof}
 
Let $a_n=2^nC_n(1/2)$, which is the number of series-reduced rooted trees with $n+1$ labeled leaves~\cite[
A000311]{Sloane}. The first few $a_n$ are $1,4,26,236, 2752$. 
It follows from Theorem~\ref{lemmaCnx} that 
$$a_n=na_{n-1}+2\sum_{r=1}^{n-1}\binom{n}{n-r+1}a_{n-r}a_{r-1},~a_0=a_1=1.$$
Very recently, Bitonti-Deb-Sokal~\cite{Bitonti24} obtained a continued fraction expression of $a_n$.

\begin{lemma}[{\cite{Liu07}}]\label{Liu07}
Let $F(x)=a(x)f(x)+b(x)\frac{\mathrm{d}}{\mathrm{d}x}f(x)$, where $a(x)$ and $b(x)$ are two real polynomials. Both $f(x)$ and $F(x)$ have only positive leading coefficients, and $\deg F=\deg f$ or $\deg f+1$. If $f(x)$ has only real zeros and $b(r)\leqslant 0$ whenever $f(r)=0$, then $F(x)$ has only real zeros.
\end{lemma}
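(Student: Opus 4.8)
The plan is to prove this by the classical sign-variation argument that underlies the Hermite--Biehler theorem. The key observation is that at any zero $r$ of $f$ one has
\begin{equation*}
F(r)=a(r)f(r)+b(r)f'(r)=b(r)f'(r),
\end{equation*}
so the sign of $F$ at a zero of $f$ is governed entirely by the product $b(r)f'(r)$. Since $b(r)\leqslant 0$ there by hypothesis, the sign of $F(r)$ is opposite to (or, when $b(r)=0$, equal to) that of $f'(r)$, and the signs of $f'$ at the zeros of $f$ are completely determined by the fact that $f$ is real-rooted with positive leading coefficient.

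First I would treat the generic case in which $f$ has $n=\deg f$ \emph{distinct} real zeros $r_1>r_2>\cdots>r_n$ and $b(r_i)<0$ strictly for every $i$. Writing $f(x)=c\prod_j(x-r_j)$ with $c>0$, one computes $\sgn f'(r_i)=(-1)^{i-1}$, whence $\sgn F(r_i)=\sgn\bigl(b(r_i)f'(r_i)\bigr)=(-1)^{i}$. Thus $F$ strictly alternates in sign along $r_1,\dots,r_n$, which forces one real zero of $F$ in each of the $n-1$ open intervals $(r_{i+1},r_i)$. Next I would locate the remaining zero(s) from the behaviour at infinity together with the degree and leading-coefficient hypotheses: since $F$ has positive leading coefficient, $F(+\infty)=+\infty$ while $\sgn F(r_1)=-1$, producing one further zero in $(r_1,+\infty)$ and hence $n$ real zeros already. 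If $\deg F=n$ we are done; if $\deg F=n+1$, then $\sgn F(-\infty)=(-1)^{n+1}$ differs from $\sgn F(r_n)=(-1)^{n}$, yielding one more zero in $(-\infty,r_n)$ and a total of $n+1$. In either case $F$ has $\deg F$ real zeros, so all of its zeros are real.

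Finally, I would remove the two genericity assumptions by a limiting argument. A zero $r$ of $f$ of multiplicity $m$ is a zero of $f'$ of multiplicity exactly $m-1$, hence of $F$ of multiplicity at least $m-1$, so repeated roots contribute ``forced'' real zeros; and a point where $b(r)=0$ simply makes $r$ itself a real zero of $F$. The clean way to package both phenomena is to approximate $f$ by polynomials $f_\varepsilon$ having only simple real zeros with $b<0$ at each of them, apply the generic argument to $F_\varepsilon=af_\varepsilon+bf_\varepsilon'$, and pass to the limit, using that coefficientwise limits of real-rooted polynomials of bounded degree are real-rooted (Hurwitz).

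The main obstacle is precisely this limiting step. One must choose the perturbations $f_\varepsilon$ so that simultaneously $F_\varepsilon\to F$ with no loss of degree or change in the sign of the leading coefficient, while ensuring that the conditions needed to run Steps~1--2 hold for each $f_\varepsilon$ and that no pair of real zeros of $F_\varepsilon$ collides and escapes into the complex plane in the limit. Verifying that the weak sign condition $b(r)\leqslant 0$ together with possible root multiplicities does not destroy the zero count is where the real care is required; all the remaining computations (the sign of $f'$ at simple zeros and the evaluations at $\pm\infty$) are routine.
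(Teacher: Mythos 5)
The paper itself offers no proof of this lemma---it is quoted verbatim from Liu and Wang \cite{Liu07}---so your argument has to stand on its own rather than be measured against a proof in the text. Your generic case is the classical interlacing argument and is correct: at simple zeros $r_1>\cdots>r_n$ of $f$ with $b(r_i)<0$ strictly, $F(r_i)=b(r_i)f'(r_i)$ has sign $(-1)^i$, the intermediate value theorem places $n-1$ zeros between consecutive $r_i$'s, and the analysis at $\pm\infty$ supplies the remaining one or two zeros according to whether $\deg F=\deg f$ or $\deg f+1$. Your two observations about the degenerate phenomena (a zero of $f$ of multiplicity $m$ is a zero of $F$ of multiplicity at least $m-1$; a zero $r$ of $f$ with $b(r)=0$ is itself a zero of $F$) are also correct.

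The genuine gap is the proposed reduction of the degenerate case to the generic one. Approximating $f$ by polynomials $f_\varepsilon$ ``having only simple real zeros with $b<0$ at each of them'' while $b$ stays fixed is impossible in general: take $f(x)=x$, $a(x)=1$, $b(x)=x^2$, so $F(x)=x+x^2$ and every hypothesis of the lemma holds, with $b(0)=0$. Since $b>0$ at every point other than $0$, every simple real zero of every polynomial close to $f$ lies where $b$ is strictly positive; the family $f_\varepsilon$ you need does not exist, and the limiting argument never gets started. The repair is to perturb $b$, not $f$. First factor out $g=\prod_i(x-s_i)^{m_i-1}$, which divides $f$, $f'$ and hence $F$, so that $F=g\widetilde{F}$ with $\widetilde{F}=af_1+bh$, where $f_1=f/g$ has the simple zeros $s_1>\cdots>s_k$ and $h=f'/g$ satisfies $\operatorname{sgn}h(s_i)=(-1)^{i-1}$. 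Then replace $b$ by $b-\varepsilon$: one has $\widetilde{F}_\varepsilon=\widetilde{F}-\varepsilon h$, which keeps the degree and positive leading coefficient of $\widetilde{F}$ because $\deg h=k-1<\deg\widetilde{F}$, and now $(b-\varepsilon)(s_i)<0$ strictly, so your generic count applies verbatim to $\widetilde{F}_\varepsilon$ (with $h$ playing the role of $f'$); letting $\varepsilon\to 0^+$ and invoking Hurwitz gives real-rootedness of $\widetilde{F}$, hence of $F=g\widetilde{F}$. (Alternatively one can avoid limits altogether by counting zeros of $\widetilde{F}$ with multiplicity: between consecutive points $s_i$ with $\widetilde{F}(s_i)\neq 0$ the parity of the zero count is forced by the endpoint signs, and the intermediate points with $b(s_i)=0$ are themselves zeros; a short parity check yields at least $\deg\widetilde{F}$ real zeros.) As written, however, your final step rests on a perturbation that cannot be performed.
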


Combining~\eqref{Cnx-recu} and Lemma~\ref{Liu07}, one can immediately obtain that the zeros of $C_{n-1}(x)$ are real, simple and separate those of $C_{n}(x)$ for $n\geqslant 2$. Comparing~\eqref{Cnx-recu} with Theorem~\ref{lemmaCnx}, we find that
$$\sum_{r=1}^{n-1}\binom{n}{n-r+1}C_{n-r}(x)C_{r-1}(x)=(n-1)xC_{n-1}(x)+x(1-x)\frac{\mathrm{d}}{\mathrm{d}x}C_{n-1}(x),$$
Note that 
\begin{align*}
&nC_{n-1}(x)+\sum_{r=1}^{n-1}\binom{n}{n-r+1}C_{n-r}(x)C_{r-1}(x)\\
&=nC_{n-1}(x)+\sum_{r=1}^{n-1}\binom{n}{r-1}C_{n-r}(x)C_{r-1}(x)\\
&=nC_{n-1}(x)+\sum_{k=0}^{n-2}\binom{n}{k}C_{n-k-1}(x)C_k(x)\\
&=\sum_{k=0}^{n-1}\binom{n}{k}C_{k}(x)C_{n-k-1}(x).
\end{align*}
Then $$\sum_{k=0}^{n-1}\binom{n}{k}C_{k}(x)C_{n-k-1}(x)=(n+(n-1)x)C_{n-1}(x)+x(1-x)\frac{\mathrm{d}}{\mathrm{d}x}C_{n-1}(x).$$
Therefore, by Lemma~\ref{Liu07}, we immediately get the following result.
\begin{proposition}
The binomial convolutions $\sum_{k=0}^{n-1}\binom{n}{k}C_{k}(x)C_{n-k-1}(x)$ have only real zeros.
\end{proposition}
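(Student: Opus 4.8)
The plan is to observe that essentially all the work has already been done in the display immediately preceding the statement, and simply to feed the resulting identity into Lemma~\ref{Liu07}. Writing $F(x)=\sum_{k=0}^{n-1}\binom{n}{k}C_k(x)C_{n-k-1}(x)$, the computation above gives
\begin{equation*}
F(x)=\bigl(n+(n-1)x\bigr)C_{n-1}(x)+x(1-x)\frac{\mathrm{d}}{\mathrm{d}x}C_{n-1}(x),
\end{equation*}
so I would apply Lemma~\ref{Liu07} with $f(x)=C_{n-1}(x)$, $a(x)=n+(n-1)x$ and $b(x)=x(1-x)$, and verify its four hypotheses in turn.

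First I would dispose of the elementary hypotheses. From the recursion~\eqref{Cnx-recu} and the base case $C_0(x)=1$ one checks that each $C_k(x)$ has degree $k$ with nonnegative coefficients, hence positive leading coefficient. Therefore every summand $C_k(x)C_{n-k-1}(x)$ has degree $n-1$ and positive leading coefficient, so $F(x)$ has degree $n-1=\deg C_{n-1}$ with positive leading coefficient. This at once supplies the positivity of the leading coefficients of both $f$ and $F$ and the degree condition $\deg F=\deg f$ required by the lemma.

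Next I would supply the real-rootedness of $f$ together with the sign condition on $b$. That $C_{n-1}(x)$ has only real zeros was already recorded above, by combining~\eqref{Cnx-recu} with Lemma~\ref{Liu07}. The key remaining point is that, since $C_{n-1}(x)$ has nonnegative coefficients, it has no positive real zero, so every zero $r$ satisfies $r\leqslant 0$; for such $r$ one has $1-r\geqslant 1>0$ and hence $b(r)=r(1-r)\leqslant 0$. This is precisely the hypothesis ``$b(r)\leqslant 0$ whenever $f(r)=0$'' of Lemma~\ref{Liu07}, and the conclusion that $F(x)$ has only real zeros follows immediately.

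I do not anticipate a genuine obstacle: the rewriting of $F(x)$ into the form $a(x)f(x)+b(x)f'(x)$ is the whole substance of the argument, and it is already in hand. The one place that warrants a moment's care is confirming the sign of $b$ at the zeros of $f$, but this reduces to the standard observation that a polynomial with nonnegative coefficients cannot vanish at a positive real number.
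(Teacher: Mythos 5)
Your proof is correct and follows essentially the same route as the paper: both rest on the identity $\sum_{k=0}^{n-1}\binom{n}{k}C_{k}(x)C_{n-k-1}(x)=\bigl(n+(n-1)x\bigr)C_{n-1}(x)+x(1-x)\frac{\mathrm{d}}{\mathrm{d}x}C_{n-1}(x)$ derived just before the statement, followed by an application of Lemma~\ref{Liu07} with $f=C_{n-1}$, $a(x)=n+(n-1)x$, $b(x)=x(1-x)$. The only difference is that the paper invokes the lemma ``immediately,'' while you explicitly check its hypotheses (degree, positive leading coefficients, real-rootedness of $C_{n-1}$, and $b(r)\leqslant 0$ at the necessarily nonpositive zeros $r$ of $C_{n-1}$), which is exactly what the paper leaves implicit.
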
 

The $n\times n$ {\it lower Hessenberg matrix} $H_n$ is defined as follows:
\begin{equation*}
H_n=\begin{pmatrix}
h_{11}&h_{12}&0&\cdots&0&0\\
h_{21}&h_{22}&h_{23}&\cdots&0&0\\
h_{31}&h_{32}&h_{33}&\cdots&0&0\\
\vdots&\vdots&\vdots&\ddots&\vdots&\vdots\\
h_{n-1,1}&h_{n-1,2}&h_{n-1,3}&\cdots&h_{n-1,n-1}&h_{n-1,n}\\
h_{n,1}&h_{n,2}&h_{n,3}&\cdots&h_{n,n-1}&h_{n,n}\\
    \end{pmatrix},
  \end{equation*}
where $h_{ij}=0$ if $j>i+1$.
Hessenberg matrices frequently appear in numerical analysis~\cite{Cahill02,Kilic17}.
Setting $H_0=1$,
Cahill et al.~\cite{Cahill02} gave a recursion for
the determinant of the matrix $H_n$ as follows:
\begin{equation}\label{Hn-recu}
\det H_{n}=h_{n,n}\det H_{n-1}+\sum_{r=1}^{n-1}\left((-1)^{n-r}h_{n,r}\prod_{j=r}^{n-1}h_{j,j+1}\det H_{r-1}\right).
\end{equation} 
By induction, comparing~\eqref{Hn-recu} and Theorem~\ref{lemmaCnx}, we discover the following result.
\begin{theorem}\label{thmCn}
For any $n\geqslant 1$, the polynomial $C_n(x)$ can be expressed as the following lower
Hessenberg determinant of order $n$:
 \begin{equation*}
C_{n}(x)=\begin{vmatrix}
x&-1&0&\cdots&0&0\\
C_1(x)&\binom{2}{1}x&-1&\cdots&0&0\\
C_2(x)&\binom{3}{1}C_1(x)&\binom{3}{2}x&\cdots&0&0\\
\vdots&\vdots&\vdots&\ddots&\vdots&\vdots\\
C_{n-2}(x)&\binom{n-1}{1}C_{n-3}(x)&\binom{n-1}{2}C_{n-4}(x)&\cdots&\binom{n-1}{n-2}x&-1\\
C_{n-1}(x)&\binom{n}{1}C_{n-2}(x)&\binom{n}{2}C_{n-3}(x)&\cdots&\binom{n}{n-2}C_1(x)&\binom{n}{n-1}x
    \end{vmatrix}_{n\times n}.
  \end{equation*}
\end{theorem}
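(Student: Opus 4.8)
The plan is to prove the identity by induction on $n$, combining the Hessenberg determinant recursion~\eqref{Hn-recu} of Cahill et al.\ with the convolution recursion of Theorem~\ref{lemmaCnx}. First I would record the entries of the claimed matrix explicitly. Writing $H_n$ for the displayed lower Hessenberg matrix, its entries $h_{i,j}$ are $-1$ on the superdiagonal ($j=i+1$), equal to $ix$ on the diagonal ($j=i$, using $C_1(x)=x$), and equal to $\binom{i}{j-1}C_{i-j}(x)$ strictly below the diagonal ($j<i$), with $h_{i,j}=0$ for $j>i+1$. The base cases are immediate, since $\det H_0=1=C_0(x)$ and $\det H_1=x=C_1(x)$.

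For the inductive step, I would substitute these entries into~\eqref{Hn-recu}. The crucial simplification is that every superdiagonal entry equals $-1$, so for $1\leqslant r\leqslant n-1$ the product $\prod_{j=r}^{n-1}h_{j,j+1}$ is a product of exactly $n-r$ copies of $-1$, hence equals $(-1)^{n-r}$. This cancels against the explicit prefactor $(-1)^{n-r}$ appearing in~\eqref{Hn-recu}, leaving each summand with a positive sign. Using $h_{n,n}=nx$ and $h_{n,r}=\binom{n}{r-1}C_{n-r}(x)$, the recursion therefore collapses to
\begin{equation*}
\det H_n=nx\det H_{n-1}+\sum_{r=1}^{n-1}\binom{n}{r-1}C_{n-r}(x)\det H_{r-1}.
\end{equation*}
Invoking the induction hypothesis $\det H_k=C_k(x)$ for every $k<n$ then rewrites the right-hand side as $nxC_{n-1}(x)+\sum_{r=1}^{n-1}\binom{n}{r-1}C_{n-r}(x)C_{r-1}(x)$.

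Finally I would match this expression against Theorem~\ref{lemmaCnx} via the elementary symmetry $\binom{n}{r-1}=\binom{n}{n-r+1}$, which shows the two sums agree term by term; consequently $\det H_n=C_n(x)$ and the induction closes. The only point demanding genuine care is the sign bookkeeping: one must verify that the product of the $n-r$ superdiagonal entries $-1$ exactly annihilates the $(-1)^{n-r}$ factor in~\eqref{Hn-recu}, so that the off-diagonal contributions enter with a plus sign rather than alternating. Once this is confirmed, the remainder is a direct substitution together with the binomial identity, and I anticipate no further obstacle.
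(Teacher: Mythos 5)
Your proposal is correct and follows exactly the paper's intended argument: the paper proves this theorem by the one-line remark ``by induction, comparing~\eqref{Hn-recu} and Theorem~\ref{lemmaCnx}'', and your write-up simply supplies the details of that comparison (the cancellation of $(-1)^{n-r}$ against the product of superdiagonal entries $-1$, and the symmetry $\binom{n}{r-1}=\binom{n}{n-r+1}$). No gaps; the sign bookkeeping and the substitution are carried out correctly.
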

When $n=4$, from Theorem~\ref{thmCn}, we see that
 \begin{equation*}
C_4(x)=\begin{vmatrix}
x& -1& 0& 0\\
x& 2x&-1& 0\\
x + 2 x^2& 3 x& 3 x& -1\\
x + 8 x^2 + 6 x^3& 4 x + 8 x^2& 6 x& 4 x
    \end{vmatrix}=x + 22 x^2 + 58 x^3 + 24 x^4.
  \end{equation*}
\section{A class of restricted Stirling permutations and signed permutations}\label{Section3}
\subsection{Context-free grammars}
\hspace*{\parindent}

A {\it context-free grammar} (also known as {\it Chen's grammar}~\cite{Chen93,Dumont96}) $G$ over an alphabet
$V$ is defined as a set of substitution rules replacing a letter in $V$ by a formal function over $V$.
The formal derivative $D_G$ with respect to $G$ satisfies the derivation rules:
$$D_G(u+v)=D_G(u)+D_G(v),~D_G(uv)=D_G(u)v+uD_G(v).$$
So the {\it Leibniz rule} holds:
\begin{equation*}\label{Leibniz}
D_G^n(uv)=\sum_{k=0}^n\binom{n}{k}D_G^k(u)D_G^{n-k}(v).
\end{equation*}

In~\cite{Dumont96}, Dumont obtained the grammar for Eulerian polynomials by using a grammatical labeling of circular permutations.
\begin{proposition}[{\cite[Section~2.1]{Dumont96}}]\label{grammar03}
Let $G=\{a\rightarrow ab, b\rightarrow ab\}$.
Then for $n\geqslant 1$, one has
\begin{equation*}
D_{G}^n(a)=D_{G}^n(b)=b^{n+1}A_n\left(\frac{a}{b}\right).
\end{equation*}
\end{proposition}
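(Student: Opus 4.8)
The plan is to prove the two assertions separately and by induction on $n$. The first equality is immediate from the symmetry of the grammar: since $D_G(a)=ab=D_G(b)$, applying $D_G^{n-1}$ to both sides yields $D_G^n(a)=D_G^n(b)$ for all $n\geqslant 1$. Thus it suffices to establish the single identity $D_G^n(a)=b^{n+1}A_n(a/b)$. I would first observe that the right-hand side is a bona fide polynomial in $a$ and $b$, homogeneous of degree $n+1$, because $\deg A_n\leqslant n$; concretely, writing $A_n(x)=\sum_k\Eulerian{n}{k}x^k$ one has $b^{n+1}A_n(a/b)=\sum_k\Eulerian{n}{k}a^k b^{n+1-k}$. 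This homogeneity is what makes the substitution $x=a/b$ legitimate throughout.

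The crux of the argument is a transformation rule describing how $D_G$ acts on an arbitrary homogeneous polynomial. For $P(a,b)$ homogeneous of degree $m$, write $P(a,b)=b^m p(a/b)$ with $p(x)=P(x,1)$. Because $D_G$ is a derivation with $D_G(a)=D_G(b)=ab$, the chain rule gives $D_G(P)=\bigl(\tfrac{\partial P}{\partial a}+\tfrac{\partial P}{\partial b}\bigr)ab$. Computing the two partial derivatives of $b^m p(a/b)$ and collecting terms in the variable $x=a/b$, I expect to obtain the clean identity
$$D_G\bigl(b^m p(a/b)\bigr)=b^{m+1}\bigl(mx\,p(x)+x(1-x)p'(x)\bigr).$$
Deriving and simplifying this formula is the one place where genuine (if routine) bookkeeping is required, and it is the main obstacle; everything else is formal.

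With the transformation rule in hand, the induction closes quickly. For the base case $n=1$ we have $D_G(a)=ab=b^2\cdot(a/b)=b^2 A_1(a/b)$, using $A_1(x)=x$. For the inductive step, assume $D_G^n(a)=b^{n+1}A_n(a/b)$ and apply the transformation rule with $m=n+1$ and $p=A_n$:
$$D_G^{n+1}(a)=D_G\bigl(b^{n+1}A_n(a/b)\bigr)=b^{n+2}\bigl((n+1)x\,A_n(x)+x(1-x)A_n'(x)\bigr).$$
The parenthesized expression is precisely $A_{n+1}(x)$ by the defining recursion $A_{n+1}(x)=(n+1)xA_n(x)+x(1-x)\frac{\mathrm{d}}{\mathrm{d}x}A_n(x)$ recalled in the introduction, so $D_G^{n+1}(a)=b^{n+2}A_{n+1}(a/b)$, completing the induction. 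As a sanity check one may instead expand $D_G$ term by term on $\sum_k\Eulerian{n}{k}a^k b^{n+1-k}$ and recognize the Eulerian-number recurrence, which yields the same conclusion.
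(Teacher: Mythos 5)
Your proof is correct, and it takes a genuinely different route from the paper's. The paper never proves this proposition internally: it is quoted from Dumont, whose proof is combinatorial, via a grammatical labeling of circular permutations in which applying $D_G$ corresponds to inserting the next largest letter; the closest the paper comes on its own is Example~\ref{ex-Eul}, which realizes the grammar analytically by $T=x\frac{\mathrm{d}}{\mathrm{d}x}$, $a=\frac{1}{1-x}$, $b=\frac{x}{1-x}$ and the series identity~\eqref{Anx-poly-def}. You instead give a self-contained algebraic induction: since $D_G$ is a derivation, $D_G(P)=ab\bigl(\frac{\partial P}{\partial a}+\frac{\partial P}{\partial b}\bigr)$, and your transformation law $D_G\bigl(b^mp(a/b)\bigr)=b^{m+1}\bigl(mx\,p(x)+x(1-x)p'(x)\bigr)$ with $x=a/b$ checks out by direct computation (the three contributions $xb^{m+1}p'$, $mxb^{m+1}p$ and $-x^2b^{m+1}p'$ assemble exactly as you claim), so the induction closes using the Eulerian recurrence recalled in the paper's introduction. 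Comparing the two: your argument is elementary and establishes the statement as a polynomial identity in the formal variables $a,b$, something the analytic realization of Example~\ref{ex-Eul} cannot deliver by itself, because the particular functions chosen there satisfy $a-b=1$, so evaluation at them is not injective on polynomials in $a,b$; Dumont's labeling, by contrast, explains combinatorially why the grammar encodes descent statistics, and that labeling technique is precisely what the paper reuses for its new results in Section~\ref{Section3}. Your method also transfers verbatim to other symmetric grammars: with $G=\{a\rightarrow ab^2,\ b\rightarrow ab^2\}$ the same computation gives $D_G\bigl(b^{2m+1}p(a/b)\bigr)=b^{2m+3}\bigl((2m+1)x\,p(x)+x(1-x)p'(x)\bigr)$, which together with~\eqref{Cnx-recu} yields the second-order statement $D_G^n(a)=b^{2n+1}C_n\bigl(\frac{a}{b}\bigr)$ of Example~\ref{ExG4}.
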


In the following three examples, we introduce a new differential operator method, which gives a method to search grammars.
We first establish a connection
between~\eqref{Anx-poly-def} and Proposition~\ref{grammar03}.
\begin{example}\label{ex-Eul}
Setting
\begin{equation*}\label{Tab}
T=x\frac{\mathrm{d}}{\mathrm{d}x},~a=\frac{1}{1-x}~\text{and}~b=\frac{x}{1-x},
\end{equation*}
we obtain $T(a)=T(b)=ab$. By~\eqref{Anx-poly-def}, we have $T^n(a)=D_{G}^n(a)$, where $G=\{a\rightarrow ab, b\rightarrow ab\}$.
\end{example}

The grammar for the second-order Eulerian polynomials was first discovered by Chen-Fu~\cite{Chen17} by using a
grammatical labeling of Stirling permutations. Here we give an original derivation.
\begin{example}\label{ExG4}
Setting $$T=\frac{x}{1-x}\frac{\mathrm{d}}{\mathrm{d}x},~a=\frac{x}{1-x},~b=\frac{1}{1-x},$$
we get $T(a)=ab^2$ and $T(b)=ab^2$. Let $G=\{a\rightarrow ab^2,~b\rightarrow ab^2\}$.
By~\eqref{Cnx-poly-def}, we see that $$D_{G}^n(a)=T^n(a)=b^{2n+1}C_n\left(\frac{a}{b}\right).$$
\end{example}
Following~\cite[p.~29]{Hwang20}, one has
\begin{equation}\label{dxdy}
\left(\frac{\mathrm{d}}{\mathrm{d}y}\right)^n\frac{\mathrm{e}^y}{1-\mathrm{e}^{2y}}=\frac{\mathrm{e}^xB_n(\mathrm{e}^{2x})}{(1-\mathrm{e}^{2x})^{n+1}},
\end{equation}
where $B_n(x)$ is type $B$ Eulerian polynomials.
Note that $\frac{\mathrm{d}}{\mathrm{d}y}=\frac{\mathrm{dx}}{\mathrm{d}y}\frac{\mathrm{d}}{\mathrm{d}x}$ and $x=\mathrm{e}^y$ is the solution of $x=\frac{\mathrm{d}x}{\mathrm{d}y}$.
It follows from~\eqref{dxdy} that
\begin{equation}\label{Bnxsum}
\left(x\frac{\mathrm{d}}{\mathrm{d}x}\right)^n\frac{x}{1-x^2}=\frac{xB_n(x^2)}{(1-x^2)^{n+1}}.
\end{equation}

We now deduce the grammar for the type $B$ Eulerian polynomials $B_n(x)$.
\begin{example}\label{ex-Bn}
Setting $$T=x\frac{\mathrm{d}}{\mathrm{d}x},~a=\frac{x}{\sqrt{1-x^2}}~\text{and}~b=\frac{1}{\sqrt{1-x^2}},$$
we get $T(a)=ab^2,~T(b)=a^2b$.
Let $G=\{a\rightarrow ab^2,~b\rightarrow a^2b\}$.
It follows from~\eqref{Bnxsum} that
\begin{equation*}\label{DG3ab}
D_{G}^n(ab)=T^n(ab)=ab^{2n+1}B_n\left(\frac{a^2}{b^2}\right).
\end{equation*}
\end{example}

In equivalent forms, Dumont~\cite{Dumont80}, Haglund-Visontai~\cite{Haglund12}, Chen-Hao-Yang~\cite{Chen2102} and Ma-Ma-Yeh~\cite{Ma19} all showed that
\begin{equation}\label{DGxyz}
D_G^n(x)=D_G^n(y)=D_G^n(z)=C_n(x,y,z),
\end{equation}
where $G=\{x \rightarrow xyz,~y\rightarrow xyz,~z\rightarrow xyz\}$.
By the change of grammar
$u=x+y+z,~v=xy+yz+zx$ and $w=xyz$,
it is easy to verify that
$D_{G}(u)=3w,~D_{{G}}(v)=2uw,~D_{G}(w)=vw$. So we get a grammar
$H=\{w\rightarrow vw, u\rightarrow 3w, v\rightarrow 2uw\}$.
Recently, Chen-Fu~\cite{Chen22} discovered that for any $n\geqslant 1$, one has
\begin{equation*}\label{Chen22}
C_n(x,y,z)=D_G^n(x)=D_H^{n-1}(w)=\sum_{i+2j+3k=2n+1}\gamma_{n,i,j,k}u^iv^jw^k,
\end{equation*}
where the coefficient $\gamma_{n,i,j,k}$ equals the number of 0-1-2-3 increasing plane trees
on $[n]$ with $k$ leaves, $j$ degree one vertices and $i$ degree two vertices.
Substituting $u\rightarrow x+y+z,~v\rightarrow xy+yz+zx$ and $w\rightarrow xyz$, one has
\begin{equation*}\label{Cnxyz}
C_{n}(x,y,z)=\sum_{i+2j+3k=2n+1}\gamma_{n,i,j,k}(x+y+z)^{i}(xy+yz+zx)^{j}(xyz)^k,
\end{equation*}
\subsection{A class of restricted Stirling permutations}
\hspace*{\parindent}

The following fundamental result will be used in our discussion.
\begin{lemma}
Let $G=\{x \rightarrow xyz,~y\rightarrow xyz,~z\rightarrow xyz\}$. Then we have
\begin{equation}\label{DGyz}
 D_G^n(yz)=\sum_{\sigma\in{\mq}_{n+1}^{(1)}}x^{\plat(\sigma)}y^{\des(\sigma)}z^{\asc(\sigma)},
\end{equation}
where ${\mq}_{n+1}^{(1)}$ is the set of Stirling permutations over the multiset $\{1,2,2,3,3,\ldots,n,n,n+1,n+1\}$.
\end{lemma}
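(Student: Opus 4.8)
The plan is to prove \eqref{DGyz} by a grammatical labeling of the restricted Stirling permutations in ${\mq}_{n+1}^{(1)}$, combined with induction on $n$. To each $\sigma\in{\mq}_{n+1}^{(1)}$ (with the convention $\sigma_0=\sigma_{2n+2}=0$, so that $\sigma$ has $2n+2$ adjacencies) I would attach a weight by labeling every adjacency: an ascent slot receives the label $z$, a descent slot receives $y$, and a plateau slot receives $x$. The product of all slot labels is then $x^{\plat(\sigma)}y^{\des(\sigma)}z^{\asc(\sigma)}$, so that the weighted sum over ${\mq}_{n+1}^{(1)}$ is exactly the right-hand side of \eqref{DGyz}. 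The goal is to show that applying $D_G$ to this weighted sum over ${\mq}_{n+1}^{(1)}$ produces the weighted sum over ${\mq}_{n+2}^{(1)}$, which together with the base case yields the claim.

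For the base case $n=0$, the set ${\mq}_1^{(1)}$ consists of the single word $1$; its two slots $(0,1)$ and $(1,0)$ are an ascent and a descent, so its weight is $yz=D_G^0(yz)$, as required. For the inductive step I would invoke the standard fact that, since $n+2$ is the largest letter, every $\tau\in{\mq}_{n+2}^{(1)}$ arises uniquely by inserting the pair $(n+2)(n+2)$ into exactly one of the $2n+2$ slots of a unique $\sigma\in{\mq}_{n+1}^{(1)}$ (recovered by deleting the two copies of $n+2$, which must be adjacent by the Stirling condition). Inserting $(n+2)(n+2)$ into a slot between $\sigma_i$ and $\sigma_{i+1}$ destroys that slot and creates three new ones: $(\sigma_i,n+2)$ is an ascent (label $z$), $(n+2,n+2)$ is a plateau (label $x$), and $(n+2,\sigma_{i+1})$ is a descent (label $y$).

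The heart of the argument is the bookkeeping that matches this insertion with the grammar. Removing the label $v$ of the destroyed slot and multiplying by the three new labels, whose product is $xyz$, is precisely the substitution $v\rightarrow xyz$ for $v\in\{x,y,z\}$. Since the number of slots of type $x$ (resp. $y$, $z$) equals the exponent of $x$ (resp. $y$, $z$) in the weight monomial, summing the insertion over all $2n+2$ slots of $\sigma$ reproduces exactly the Leibniz expansion of $D_G$ applied to the weight of $\sigma$. Hence $D_G$ carries the weighted sum over ${\mq}_{n+1}^{(1)}$ to the weighted sum over ${\mq}_{n+2}^{(1)}$, and the induction closes.

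I expect the main obstacle to be the careful verification that the insertion map is a bijection for this \emph{restricted} family. In particular one must check that the single occurrence of the letter $1$ creates no exceptional behaviour — it is precisely what forces the seed to be $yz$ rather than $xyz$ — and that inserting into a plateau slot, i.e.\ between two equal letters, still yields a legitimate Stirling permutation (it does, since the inserted letters exceed every existing one). Once the bijection and the three-case label calculus are confirmed, the identification of ``delete a label and append $xyz$'' with the grammar rule $v\rightarrow xyz$ is immediate, and the proof follows.
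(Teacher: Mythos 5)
Your proposal is correct and takes essentially the same route as the paper: the identical grammatical labeling (plateau $\rightarrow x$, descent $\rightarrow y$, ascent $\rightarrow z$) together with induction on inserting the largest pair, where each insertion into a labeled slot realizes one substitution rule of $G$. The only differences are presentational—you label adjacencies rather than entries and make the deletion bijection and Leibniz bookkeeping explicit, which the paper leaves implicit.
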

\begin{proof}
We first introduce a grammatical labeling of $\sigma\in{\mq}_{n+1}^{(1)}$ as follows:
\begin{itemize}
\item [\rm ($L_1$)]If $\sigma_i$ is a plateau, then put a superscript label $x$ right after $\sigma_i$.
 \item [\rm ($L_2$)]If $\sigma_i$ is a descent, then put a superscript label $y$ right after $\sigma_i$;
 \item [\rm ($L_3$)]If $\sigma_i$ is an ascent, then put a superscript label $z$ right after $\sigma_i$;
\end{itemize}
The weight of $\sigma$ is given by $w(\sigma)=x^{\plat(\sigma)}y^{\des(\sigma)}z^{\asc(\sigma)}$.
When $n=0$, we have ${\mq}_{1}^{(1)}=\{^z1^y\}$, which corresponds to $yz$. When $n=1$, we have 
${\mq}_{2}^{(1)}=\{^z1^z2^x2^y,^z2^x2^y1^y\}$, which corresponds to $D_G(yz)=x(y^2z+yz^2)$.
When $n=2$, the weighted elements of ${\mq}_{3}^{(1)}$ can be listed as follows:
$$^z1^z2^x2^z3^x3^y,~^z1^z2^z3^x3^y2^y,~^z1^z3^x3^y2^x2^y,~^z3^x3^y1^z2^x2^y,$$
$$^z2^x2^y1^z3^x3^y,~^z2^x2^z3^x3^y1^y,~^z2^z3^x3^y2^y1^y,~^z3^x3^y2^x2^y1^y.$$
The sum of weights of those elements is given as follows: 
$$D_G^2(yz)=x^2y^3z+4x^2y^2z^2+xy^3z^2+x^2yz^3+xy^2z^3.$$ So the result holds for $n=0,1,2$.
Suppose we get all labeled permutations in $\mq_{n-1}^{(1)}$, where $n\geqslant 3$. Let
$\sigma'$ be obtained from $\sigma\in \mq_{n-1}^{(1)}$ by inserting the pair $nn$.
Then the changes of labeling are illustrated as follows:
$$\cdots\sigma_i^x\sigma_{i+1}\cdots\mapsto \cdots\sigma_i^zn^xn^y\sigma_{i+1}\cdots;$$
$$\cdots\sigma_i^y\sigma_{i+1}\cdots\mapsto \cdots\sigma_i^zn^xn^y\sigma_{i+1}\cdots;$$
$$\cdots\sigma_i^z\sigma_{i+1}\cdots\mapsto \cdots\sigma_i^zn^xn^y\sigma_{i+1}\cdots.$$

In each case, the insertion of the string $nn$ corresponds to the operator $D_G$.
By induction, the action of the formal derivative $D_G$
on the set of weighted Stirling permutations in $\mq_{n-1}^{(1)}$ gives the set of 
weighted Stirling permutations in $\mq_{n}^{(1)}$, and so~\eqref{DGyz} holds. 
\end{proof}

Define $$E_n(x,y,z)=\sum_{\sigma\in{\mq}_{n}^{(1)}}x^{\plat(\sigma)}y^{\des(\sigma)}z^{\asc(\sigma)}.$$
By~\eqref{DGyz}, we see that
\begin{equation}\label{Dumont801}
E_{n+1}(x,y,z)=xyz\left(\frac{\partial}{\partial x}+\frac{\partial}{\partial y}+\frac{\partial}{\partial z}\right)E_n(x,y,z),~E_1(x,y,z)=yz.
\end{equation}
Let $E_n(x,y,z)=\sum_{i\geqslant 1,j\geqslant 1}E_{n,i,j}x^iy^jz^{2n-i-j}$ for $n\geqslant 2$. It follows from~\eqref{Dumont801} 
that 
\begin{equation}\label{Dumont802}
E_{n+1,i,j}=iE_{n,i,j-1}+jE_{n,i-1,j}+(2n-i-j+2)E_{n,i-1,j-1},
\end{equation}
with $E_{1,0,1}=1$ and $E_{1,i,j}=0$ for all $(i,j)\neq(0,1)$.
By~\eqref{DGxyz}, we find that 
$$C_{n+1}(x,y,z)=D_G^{n}(xyz)=\sum_{k=0}^n\binom{n}{k}D_G^k(x)D_G^{n-k}(yz)=\sum_{k=0}^n\binom{n}{k}C_k(x,y,z)D_G^{n-k}(yz).$$
So the following result is immediate.
\begin{theorem}
We have 
\begin{equation*}\label{Cxyz}
C_{n+1}(x,y,z)=\sum_{k=0}^n\binom{n}{k}C_k(x,y,z)E_{n-k+1}(x,y,z).
\end{equation*}
\end{theorem}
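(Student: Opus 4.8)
The plan is to read the identity directly off the context-free grammar $G=\{x\rightarrow xyz,~y\rightarrow xyz,~z\rightarrow xyz\}$, using the Leibniz rule as the only engine. First I would invoke \eqref{DGxyz}, which asserts $D_G^{n+1}(x)=C_{n+1}(x,y,z)$, together with the substitution rule $x\rightarrow xyz$, to rewrite $C_{n+1}(x,y,z)=D_G^{n+1}(x)=D_G^{n}\left(D_G(x)\right)=D_G^{n}(xyz)$. This recasts the target polynomial as the $n$-th formal derivative of the product $x\cdot(yz)$, which is exactly the shape the Leibniz rule is built to expand.

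Next I would apply the Leibniz rule to this product, splitting it as $D_G^{n}(xyz)=\sum_{k=0}^n\binom{n}{k}D_G^k(x)D_G^{n-k}(yz)$. It then remains only to identify the two families of factors. The first factor is handled again by \eqref{DGxyz}, which gives $D_G^k(x)=C_k(x,y,z)$. The second factor is where the preceding combinatorial lemma enters: \eqref{DGyz} states $D_G^m(yz)=\sum_{\sigma\in\mq_{m+1}^{(1)}}x^{\plat(\sigma)}y^{\des(\sigma)}z^{\asc(\sigma)}$, and by the definition of $E_n(x,y,z)$ this sum is precisely $E_{m+1}(x,y,z)$; taking $m=n-k$ yields $D_G^{n-k}(yz)=E_{n-k+1}(x,y,z)$. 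Substituting both identifications into the Leibniz expansion produces $C_{n+1}(x,y,z)=\sum_{k=0}^n\binom{n}{k}C_k(x,y,z)E_{n-k+1}(x,y,z)$, as claimed. In fact the displayed computation immediately preceding the theorem has already carried out the first two steps, so the proof amounts to the last substitution.

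I expect no genuine obstacle here: the substantive work has already been invested in establishing the grammar interpretation \eqref{DGxyz} and, above all, in the combinatorial lemma underlying \eqref{DGyz}, which identifies $D_G^m(yz)$ with a generating polynomial over the restricted Stirling permutations $\mq_{m+1}^{(1)}$. The only point demanding care is the bookkeeping of the index shift: the factor $D_G^{n-k}(yz)$ corresponds to $E_{n-k+1}$, not $E_{n-k}$, because the lemma raises the subscript by one. As a sanity check, I would verify the extreme term $k=n$, where $\binom{n}{n}C_n(x,y,z)E_1(x,y,z)=C_n(x,y,z)\cdot yz$ matches the contribution $D_G^n(x)D_G^0(yz)=C_n(x,y,z)\cdot yz$, confirming that the shift is correctly aligned.
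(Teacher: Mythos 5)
Your proposal is correct and follows exactly the paper's own route: the displayed computation preceding the theorem rewrites $C_{n+1}(x,y,z)=D_G^{n}(xyz)$, applies the Leibniz rule, and identifies $D_G^k(x)=C_k(x,y,z)$ and $D_G^{n-k}(yz)=E_{n-k+1}(x,y,z)$ via the lemma, after which the paper declares the result ``immediate.'' Your index-shift check ($E_{n-k+1}$ rather than $E_{n-k}$) is the right point to be careful about, and you handled it correctly.
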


It is routine to check that $\#{\mq}_{n+1}^{(1)}=E_{n+1}(1,1,1)=(2n)!!$.
Let $\pm[n]=[n]\cup\{\overline{1},\overline{2},\ldots,\overline{n}\}$, where $\overline{i}=-i$.
The {\it hyperoctahedral group} $\msn^B$ is the group of signed permutations on $\pm[n]$ with the property that $\pi\left(\overline{i}\right)=-\pi(i)$ for all $i\in [n]$.
It is well known that $\#\msn^B=(2n)!!$. Therefore, 
$$\#{\mq}_{n+1}^{(1)}=\#\msn^B.$$
A natural idea is therefore to investigate the connections between ${\mq}_{n+1}^{(1)}$ and $\msn^B$.
\subsection{Six-variables polynomials over restricted Stirling permutations}
\hspace*{\parindent}

For $\sigma\in\mqn$, except where explicitly stated, we always assume that $\sigma_0=\sigma_{2n+1}=0$.
Let
\begin{align*}
\operatorname{Ap}(\sigma)&=\{\sigma_i\mid \sigma_{i-1}<\sigma_{i}=\sigma_{i+1}~\&~ 2\leqslant i\leqslant 2n-1\},\\
\operatorname{Lap}(\sigma)&=\{\sigma_i\mid \sigma_{i-1}<\sigma_{i}=\sigma_{i+1}~\&~ 1\leqslant i\leqslant 2n-1\}
\end{align*}
be the sets of ascent-plateaux and left ascent-plateaux of $\sigma$, respectively. Let $\ap(\sigma)$ and $\lap(\sigma)$ be the numbers of ascent-plateaux and left ascent-plateaux, respectively.
The {\it ascent-plateau polynomials} $M_n(x)$
and the {\it left ascent-plateau polynomials} $W_n(x)$ can be defined as follows:
\begin{equation*}
\begin{split}
M_n(x)&=\sum_{\sigma\in\mqn}x^{\ap(\sigma)},~
W_n(x)=\sum_{\sigma\in\mqn}x^{\lap(\sigma)}.
\end{split}
\end{equation*}
From~\cite[Proposition~1]{MaYeh17}, we see that
\begin{equation*}\label{Convo01}
\begin{split}
2^nA_n(x)&=\sum_{i=0}^n\binom{n}{i}W_i(x)W_{n-i}(x),~
B_n(x)=\sum_{i=0}^n\binom{n}{i}M_i(x)W_{n-i}(x),
\end{split}
\end{equation*}
where $B_n(x)$ is the type $B$ Eulerian polynomial over the hyperoctahedral group $\msn^B$.

Let $\pi=\pi(1)\pi(2)\cdots\pi(n)\in \msn^B$.
It should be noted that the $n$ letters
appearing in the cycle notation of $\pi\in \msn^B$ are the
letters $\pi(1),\pi(2),\ldots,\pi(n)$.
We say that $i$ is an {\it excedance} (resp.~{\it anti-excedance}, {\it fixed point}, {\it singleton}) of $\pi$ if $\pi(|\pi(i)|)>\pi(i)$ (resp.~$\pi(|\pi(i)|)<\pi(i)$, $\pi(i)=i$, $\pi(i)=\overline{i}$).
Let $\exc(\pi)$ (resp.~$\aexc(\pi)$, $\fix(\pi)$, $\single(\pi)$, $\negg(\pi)$, $\cyc(\pi)$) be the number of excedances (resp.~anti-excedances, fixed points, singletons, negative entries, cycles) of $\pi$. 
\begin{example}
The signed permutation $\pi=68\overline{3}15\overline{7}24\overline{9}$ can be written as
$(\overline{9})(\overline{3})(1,6,\overline{7},2,8,4)(5)$. Moreover, $\pi$ has two
singleton $\overline{9}$ and $\overline{3}$, one fixed point $5$, and $\pi$ has $3$ excedances, $3$ anti-excedances and $3$ negative entries.
\end{example}
According to~\cite[Corollary~3.16]{Brenti94}, we have 
$$B_n(x)=\sum_{\pi\in\msn^B}x^{\exc(\sigma)+\single(\sigma)}=\sum_{\pi\in\msn^B}x^{\exc(\sigma)+\fix(\sigma)}.$$

\begin{lemma}[{\cite[Lemma~5.3]{Ma24}}]
Let $p$ and $q$ be two given parameters.
If
\begin{equation}\label{Ixyst-G}
G=\{I\rightarrow qI(t+sp),s\rightarrow (1+p)xy,t\rightarrow (1+p)xy,x\rightarrow (1+p)xy,y\rightarrow (1+p)xy\},
\end{equation}
then we have $$D_{G}^n(I)=I\sum_{\sigma\in \msn^B}x^{\exc(\sigma)}y^{\aexc(\sigma)}s^{\single(\sigma)}t^{\fix(\sigma)}p^{\negg(\sigma)}q^{\cyc(\sigma)}.$$
\end{lemma}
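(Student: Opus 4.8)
The plan is to prove the identity by a grammatical labeling of the signed permutations together with an insertion argument, closing the induction on $n$. Recall the convention fixed above that $\sigma\in\msn^B$ is written in cycle notation on its image letters $\sigma(1),\dots,\sigma(n)$, and note first that every index $i\in[n]$ falls into exactly one of the four classes excedance, anti-excedance, fixed point, singleton: the strict inequalities defining $\exc$ and $\aexc$ automatically exclude the two kinds of $1$-cycles, since $\pi(|\pi(i)|)=\pi(i)$ whenever $\pi(i)=\pm i$. I would then attach to each letter of $\sigma$ a superscript label $x,y,t,s$ according as its index is an excedance, anti-excedance, fixed point, or singleton, and record in addition a factor $p$ for each negative letter and a factor $q$ for each cycle. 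Prefixing the bookkeeping variable $I$, the resulting weight is exactly $w(\sigma)=Ix^{\exc(\sigma)}y^{\aexc(\sigma)}s^{\single(\sigma)}t^{\fix(\sigma)}p^{\negg(\sigma)}q^{\cyc(\sigma)}$, so the claim becomes equivalent to $D_G^n(I)=\sum_{\sigma\in\ms_n^B}w(\sigma)$.

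For the base case I would take $n=0$, where $\ms_0^B$ consists of the empty permutation of weight $I$, in agreement with $D_G^0(I)=I$; equivalently, for $n=1$ the two elements $1$ and $\bar1$ contribute $qt$ and $qsp$, whose sum is $qI(t+sp)=D_G(I)$. For the inductive step the key claim is that the formal derivative $D_G$ realizes the insertion of the new largest letter $n$ in all $2n$ admissible ways into a given $\sigma\in\ms_{n-1}^B$, in the sense that $D_G\!\left(w(\sigma)\right)=\sum_{\sigma'}w(\sigma')$ as $\sigma'$ runs over these images. Here the rule $I\rightarrow qI(t+sp)$ supplies the two new $1$-cycles, namely the fixed point $(n)$ of weight $qt$ and the singleton $(\bar n)$ of weight $qsp$, whereas the common rule $\ell\rightarrow(1+p)xy$ for $\ell\in\{x,y,s,t\}$ supplies the insertions of $n$ or $\bar n$ immediately after one of the $n-1$ existing letters in its cycle, the factor $(1+p)$ recording the choice of sign. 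Since the total $\{x,y,s,t\}$-degree of $w(\sigma)$ equals $\exc+\aexc+\single+\fix=n-1$, the Leibniz expansion of $D_G$ yields precisely $n-1$ slot terms, each carrying a factor $(1+p)$, which together with the two new-cycle terms account for all $2n=2+2(n-1)$ contributions.

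It remains to verify that each substitution rule updates all six statistics correctly, and this is where the real work lies. Ignoring signs the picture is transparent: inserting the largest letter after an arrow $a\to b$ forces $a$ to point to $n$ and $n$ to point to $b$, so that $a$ becomes an excedance and $n$ an anti-excedance; consequently inserting at an excedance slot leaves $\exc$ fixed and raises $\aexc$ by one (matching the replacement of a factor $x$ by $xy$), inserting at an anti-excedance slot raises $\exc$ by one (matching $y\mapsto xy$), and inserting into either kind of $1$-cycle destroys it while creating one excedance and one anti-excedance (matching $s,t\mapsto xy$). The main obstacle is the signed bookkeeping layered on top of this: I must check, by analyzing the composite condition $\pi(|\pi(i)|)\gtrless\pi(i)$ at the inserted letter and its cyclic neighbour in every sign pattern, that inserting $\bar n$ rather than $n$ contributes exactly one additional negative letter---the factor $p$ of $(1+p)$---without otherwise disturbing the excedance/anti-excedance classification, and that insertion into a singleton $1$-cycle behaves identically to insertion into a fixed point, so that $s$ and $t$ may legitimately share the rule $\ell\rightarrow(1+p)xy$. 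Once these sign-pattern cases are settled the insertion claim holds, the induction closes, and $D_G^n(I)$ equals the asserted six-variable generating function over $\msn^B$.
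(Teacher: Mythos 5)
Your argument is correct in substance, but note first that the paper does not prove this lemma at all: it is imported verbatim from \cite[Lemma~5.3]{Ma24}, so there is no in-paper proof to compare against. Your reconstruction---grammatical labeling of signed permutations in cycle notation, with $D_G$ realizing the $2n$ insertions of $\pm n$ into $\sigma\in\ms_{n-1}^B$ (the rule $I\rightarrow qI(t+sp)$ creating the two new $1$-cycles, the common rule $\ell\rightarrow (1+p)xy$ for $\ell\in\{x,y,s,t\}$ realizing insertion after an existing letter, with $(1+p)$ recording the sign choice)---is precisely the labeling-plus-insertion induction that this paper itself uses for its related results (the lemma computing $D_G^n(yz)$ over $\mq_{n+1}^{(1)}$ and Theorem~\ref{BNQN}), and it is the standard way such grammar lemmas are proved. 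One deferred claim should be stated more carefully: inserting $\bar n$ rather than $n$ after a letter $a$ \emph{does} disturb the excedance/anti-excedance classification---it swaps the roles, since $a$'s new successor $\bar n$ is smaller than $a$ (making $a$ an anti-excedance) while $\bar n$'s successor $b$ satisfies $b>-n$ (making $\bar n$ an excedance), whereas inserting $n$ makes $a$ the excedance and $n$ the anti-excedance. What is true, and is all the grammar requires, is that in both cases the multiset of new labels is $\{x,y\}$, with one extra factor $p$ exactly when the inserted letter is negative; likewise a singleton's own negativity factor persists untouched because $p$ is a constant under $D_G$. With that adjustment your case analysis closes, as one can confirm at $n=2$: $D_G^2(I)=q^2I(t+sp)^2+qI(1+p)^2xy$, matching the sum of the weights of the eight elements of $\ms_2^B$.
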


We need more definitions.
Let $w=w_1w_2\cdots w_n$ be a word of length $n$, where $w_i$ are all integers. We say that $w_i$ is 
\begin{itemize}
  \item [$\bullet$] an {\it even indexed entry} if the 
first appearance of $w_i$ (when we read $w$ from left to right) occurs at an even position;
 \item [$\bullet$] A {\it right-to-left minimum} of $w$ is an element $w_i$ such that $w_i\leqslant w_j$
for every $j\in\{i+1,i+2,\ldots, n\}$ and $w_i$ is the first appearance from left to right;
  \item [$\bullet$]  A {\it left-to-right minimum} of $w$ is an element $w_i$ such that $w_i< w_j$
for every $j\in\{1,2,\ldots,i-1\}$ or $i=1$;
\item [$\bullet$] A {\it block} of $w$ is defined as a maximal substring that begins with a left-to-right minimum and contains no other left-to-right minima.
 \end{itemize}
Let $\operatorname{Even}(\sigma)$ and $\operatorname{Rlmin}(\sigma)$ be the sets of even indexed entries and right-to-left minima of $\sigma$, respectively. For instance, 
$$\operatorname{Even}(4\textbf{5}541{\textbf{2}}2{\textbf{3}}773{\textbf{6}}6)=\{2,3,5,6\},
~\operatorname{Rlmin}(4554\textbf{1}\textbf{2}2\textbf{3}773\textbf{6}6)=\{1,2,3,6\}.$$

It is easily derived by induction that any Stirling permutation $\sigma$ in $\mq_n^{(1)}$ has a unique decomposition as a sequence of blocks. 
More precisely, if $i\geqslant 2$ is a left-to-right minimum of $\sigma$, then the block $[i,i]_{\sigma}$ 
is the consecutive string of elements of $\sigma$ between and including the two occurrences of $i$ in $\sigma$. 
Clearly, there exists one block begins with the elements $1$.
For example, the block decomposition of $883466439912552771$ is given by $[88][34664399][1255277]$.
We use $\even(\sigma)$, $\operatorname{rlmin}(\sigma)$, $\operatorname{lrmin}(\sigma)$ and $\operatorname{bk}_2(\sigma)$ to denote the 
numbers of even indexed entries, right-to-left minima, left-to-right minima and blocks of size exactly 2 of $\sigma$, respectively.
When $\sigma_i\sigma_{i+1}$ constitutes a block with size exactly 2, it is clear that $\sigma_i=\sigma_{i+1}$. For example, $\operatorname{bk}_2(33221)=2$.
The block statistic has been studied by Kuba-Panholzer~\cite{Kuba21} and Remmel-Wilson~\cite{Remmel14}.

We need some new definitions. 
\begin{definition}
For $\sigma\in\mq_{n}^{(1)}$, we say that an entry $\sigma_i$ is 
\begin{itemize}
  \item [$\bullet$] a {\it proper ascent-plateau} if $\sigma_i$ is an ascent plateau, but it is not a right-to-left minimum;
  \item [$\bullet$] an {\it improper ascent-plateau} if $\sigma_i\in\operatorname{Ap}(\sigma)\cap \operatorname{Rlmin}(\sigma)$;
   \item [$\bullet$] a {\it trace} if there exists $2\leqslant k\leqslant n$, when the subword of $\sigma$ restricted to $\{1,2,2,\ldots,k,k\}$, 
   $\sigma_i$ is an {\it improper ascent-plateau} of this subword or $\sigma_i$ is the second entry that appears in a block of size exactly 2. 
 \end{itemize}
\end{definition}
Let $\operatorname{Pap}(\sigma),\operatorname{Impap}(\sigma)$ and $\operatorname{Trace}(\sigma)$ be the sets of proper ascent-plateaux, improper ascent-plateaux and traces of $\sigma$, respectively.
For instance, $$\operatorname{Pap}(8845541{2}2{3}773{6}6)=\{5,7\},$$
$$\operatorname{Impap}(8845541{2}2{3}773{6}6)=\{2,6\},$$
$$\operatorname{Trace}(8845541{2}2{3}773{6}6)=\{2,3,4,6,8\}.$$
It is clear that $$\operatorname{AP}(\sigma)=\operatorname{Pap}(\sigma)\cup \operatorname{Impap}(\sigma),~\operatorname{Pap}(\sigma)\cap \operatorname{Impap}(\sigma)=\emptyset.$$
We use $\operatorname{pap}(\sigma)$ and $\operatorname{impap}(\sigma)$ to denote the 
numbers of proper and improper ascent-plateaux of $\sigma$, respectively. Hence $\ap(\sigma)=\operatorname{pap}(\sigma)+\operatorname{impap}(\sigma)$.
Moreover, set $$\operatorname{cap}(\sigma)=n-\operatorname{pap}(\sigma)-\operatorname{impap}(\sigma)-\operatorname{bk}_2(\sigma)=n-\ap(\sigma)-\operatorname{bk}_2(\sigma).$$

We can now conclude the main result of this section.
\begin{theorem}\label{BNQN}
We have
$$\sum_{\sigma\in \mq_{n+1}^{(1)}}x^{\operatorname{pap}(\sigma)}y^{\operatorname{cap}(\sigma)}s^{\operatorname{impap}(\sigma)}t^{\operatorname{bk}_2(\sigma)}p^{\even(\pi)}q^{\operatorname{tr}(\sigma)}
=\sum_{\pi\in \msn^B}x^{\exc(\pi)}y^{\aexc(\pi)}s^{\single(\pi)}t^{\fix(\pi)}p^{\negg(\pi)}q^{\cyc(\pi)}.$$
\end{theorem}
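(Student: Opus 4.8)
The plan is to prove that the six-variable polynomial over $\mq_{n+1}^{(1)}$ is generated by exactly the grammar $G$ of~\eqref{Ixyst-G}, so that the identity drops out of the lemma of Ma~\cite{Ma24} after cancelling $I$. Writing
$$Q_n=\sum_{\sigma\in\mq_{n+1}^{(1)}}x^{\operatorname{pap}(\sigma)}y^{\operatorname{cap}(\sigma)}s^{\operatorname{impap}(\sigma)}t^{\operatorname{bk}_2(\sigma)}p^{\even(\sigma)}q^{\operatorname{tr}(\sigma)},$$
I would prove by induction on $n$ that $D_G^n(I)=I\,Q_n$; comparing with~\eqref{Ixyst-G} and dividing by $I$ then gives the theorem term-by-term under the dictionary $x\leftrightarrow\exc$, $y\leftrightarrow\aexc$, $s\leftrightarrow\single$, $t\leftrightarrow\fix$, $p\leftrightarrow\negg$, $q\leftrightarrow\cyc$. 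The base case is immediate: $\mq_1^{(1)}=\{1\}$ has all six statistics zero, so $Q_0=1=D_G^0(I)/I$.

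The heart of the argument is a grammatical labeling refining the ascent/descent/plateau labeling used to establish~\eqref{DGyz}. Each $\sigma\in\mq_k^{(1)}$ arises from its predecessor in $\mq_{k-1}^{(1)}$ by inserting the adjacent pair $(k)(k)$ into one of the $2k-2$ gaps of the current word, and I would attach to each gap a label from $\{I,x,y,s,t\}$ together with the scalar markers $p$ and $q$, arranged so that the product of the gap-labels of $\sigma$ equals the monomial $I\,x^{\operatorname{pap}(\sigma)}y^{\operatorname{cap}(\sigma)}s^{\operatorname{impap}(\sigma)}t^{\operatorname{bk}_2(\sigma)}p^{\even(\sigma)}q^{\operatorname{tr}(\sigma)}$. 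The single letter $I$ is carried jointly by the two extreme gaps (before the first entry and after the last); the remaining $2n-2$ interior gaps of a word in $\mq_n^{(1)}$ partition into $n-1$ pairs, one recording each already-inserted value $2,\ldots,n$, with the four letters $x,y,s,t$ marking whether that value is a proper ascent-plateau, a cap-entry, an improper ascent-plateau, or the opening of a size-$2$ block, and $p$ marking the values whose first occurrence sits at an even position.

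I would then check that inserting $(n+1)(n+1)$ realizes $D_G$ on this monomial. Inserting into an interior gap always makes the first copy of $n+1$ a proper ascent-plateau (a new $x$) and converts the host value into a cap-entry (its label changes to $y$), uniformly in its previous type; the two gaps of a pair place the first copy of $n+1$ at positions of opposite parity, producing the factor $1+p$. This is precisely $\ell\to(1+p)xy$ for each $\ell\in\{x,y,s,t\}$. Inserting into the front extreme gap turns $(n+1)(n+1)$ into a size-$2$ block whose second copy is a new trace and whose first copy lies at an odd position, giving $qt$; inserting into the back extreme gap makes the first copy an improper ascent-plateau that is a new trace at an even position, giving $qsp$; in both cases the letter $I$ is regenerated. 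This matches $I\to qI(t+sp)$, so the gap-label product transforms exactly as $D_G$ and the induction closes.

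The main obstacle will be the faithful control of the two cumulative statistics $\even$ and $\operatorname{tr}$ under a single insertion. For $\even$ one checks that inserting $(n+1)(n+1)$ shifts every later first occurrence by two positions, hence preserves its parity, so that only the parity of the new value's first copy is in play; this is what legitimizes both the $(1+p)$ factor and the invariance of the host value's parity. The genuinely delicate point is $\operatorname{tr}$: since removing the letter $n+1$ returns every subword on $\{1,\ldots,k,k\}$ with $k\leqslant n$ unchanged, only the top level $k=n+1$ can create a new trace, and one must show that an interior insertion produces no improper ascent-plateau and no size-$2$ block at that level and — most importantly — does not retroactively promote a previously non-qualifying entry to a trace, whereas each extreme insertion creates exactly one. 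Establishing this stability of the trace, together with the compatibility of the block decomposition and of the host-to-cap conversion with insertion, is where essentially all of the case analysis lives; once it is in place, the grammar identity $D_G^n(I)=I\,Q_n$ follows mechanically.
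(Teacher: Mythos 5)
Your proposal follows essentially the same route as the paper's own proof: a grammatical labeling of the words in $\mq_{n}^{(1)}$ by the letters $I,x,y,s,t$ together with the markers $p,q$, an induction in which inserting the pair $nn$ realizes the rules of the grammar $G$ in~\eqref{Ixyst-G} (the two extreme gaps giving $I\to qI(t+sp)$, each interior gap-pair giving $\ell\to(1+p)xy$), and finally the cited Lemma~5.3 of~\cite{Ma24} to convert $D_G^n(I)$ into the signed-permutation side. The delicate points you flag, namely the parity stability of $\even$ under insertion and the fact that traces are never created or destroyed retroactively, are exactly what the paper disposes of in its case analysis $(c_1)$--$(c_6)$, so your plan is sound and matches the published argument.
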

\begin{proof}
Recall that ${\mq}_{n}^{(1)}$ is the set of Stirling permutations over the multiset $\{1,2,2,3,3,\ldots,n,n\}$.
We first introduce a grammatical labeling of $\sigma\in \mq_n^{(1)}$ as follows:
\begin{itemize}
  \item [\rm ($L_1$)]we use the superscript $I$ to mark the first position (just before $\sigma_1$) 
and the last position (at the end of $\sigma$), and denoted by $\overbrace{\sigma_1\sigma_2\cdots \sigma_{2n-1}}^{I}$;
 \item [\rm ($L_2$)]if $\sigma_i$ is a proper ascent-plateau, then we label the two positions just before and right after $\sigma_i$ by a subscript label $x$;
\item [\rm ($L_3$)]if $\sigma_i$ is an improper ascent-plateau, then we label the two positions just before and right after $\sigma_i$ by a subscript label $s$;
\item [\rm ($L_4$)]if $\sigma_i\sigma_{i+1}$ constitute a block with size exactly 2, then we label the two positions just before and right after $\sigma_{i+1}$ by a subscript label $t$;
\item [\rm ($L_5$)] except the above labeled positions, there still has an even number of positions, and we use a $y$ to label pairwise nearest elements from left to right;
\item [\rm ($L_6$)]we attach a superscript label $p$ to every even indexed entry;
\item [\rm ($L_7$)]we attach a subscript label $q$ to each trace.
\end{itemize}
Note that the weight of $\sigma$ is given by $$w(\sigma)=x^{\operatorname{pap}(\sigma)}y^{\operatorname{cap}(\sigma)}s^{\operatorname{impap}(\sigma)}t^{\operatorname{bk}_2(\sigma)}p^{\even(\pi)}q^{\operatorname{tr}(\sigma)}.$$
The element in $\mq_1^{(1)}$ can be labeled as $\overbrace{1}^I$.
The labeled elements in $\mq_2^{(1)}$ can be listed as follows:
$$\overbrace{1\underbrace{2^p_q}_s2}^I,~\overbrace{2\underbrace{2_q}_t1}^I.$$
Let $G$ be given by~\eqref{Ixyst-G}.
Note that $D_{G}(I)=qI(t+sp)$.
Hence the result holds for $n=1$. We proceed by induction.
Suppose we get all labeled permutations in $\sigma\in \mq_{n-1}^{(1)}$, where $n\geqslant 2$. Let
$\widehat{{\sigma}}$ be obtained from $\sigma$ by inserting the string $nn$.
There are six ways to label the inserted string and to relabel some elements of $\sigma$:
\begin{itemize}
  \item [\rm ($c_1$)] by the definition of trace, we never need to relabel the subscript label $q$.
  \item [\rm ($c_2$)]if $nn$ is inserted immediately before or after $\sigma$, then the changes of labeling are illustrated as follows:
$$\overbrace{\sigma}^I \rightarrow\overbrace{n\underbrace{n_q}_t\sigma}^I,\quad \overbrace{\sigma}^I\rightarrow\overbrace{\sigma \underbrace{n^p_q}_sn}^I.$$
 \item [\rm ($c_3$)]if $nn$ is inserted immediately before or after an element with label $t$. Since $ii$ forms a block of size 2 which means the entries before $ii$ are all bigger than $i$, thus the first $i$ located at an odd position. The changes of labeling are illustrated as follows:
$$\cdots{i\underbrace{i_q}_t}\cdots \rightarrow \cdots i{\underbrace{n^p}_xn{\underbrace{i_q}_y}}\cdots,\quad 
\cdots{i\underbrace{i_q}_t}\cdots \rightarrow \cdots \cdots i\overbrace{{i_q}\underbrace{n}_xn}^y\cdots\cdots.$$
\item [\rm ($c_4$)]if $nn$ is inserted immediately before or after an element with label $s$, then the changes of labeling are illustrated as follows:
$$\cdots\underbrace{i^p_q}_si\cdots \rightarrow \cdots\underbrace{ n^p}_xn \overbrace{i^p_q}^y i\cdots,\quad 
\cdots\underbrace{i^p_q}_si\cdots \rightarrow \cdots \overbrace{i^p_q \underbrace{n}_xn}^y  i\cdots.$$
\item [\rm ($c_5$)]if $nn$ is inserted immediately before or after an element with label $x$, then the changes of labeling are illustrated as follows:
$$\cdots\underbrace{i^p}_xi\cdots \rightarrow \cdots\underbrace{n^p}_xn\overbrace{i^p}^yi\cdots,\quad 
\cdots\underbrace{i^p}_xi\cdots \rightarrow \cdots \overbrace{i^p \underbrace{n}_xn}^yi\cdots;$$
$$\cdots\underbrace{i}_xi\cdots \rightarrow \cdots\underbrace{n}_xn\overbrace{i^p}^yi\cdots,\quad 
\cdots\underbrace{i}_xi\cdots \rightarrow \cdots \overbrace{i \underbrace{n^p}_xn}^yi\cdots.$$
\item [\rm ($c_6$)]if the string $nn$ is inserted into either position of a pair labeled by $y$, then the first $n$ always get a label $x$ and there still two positions get a label $y$. For each pair of positions labeled by $y$, there is an odd number of entries between them. So we get the substitution rule $y \rightarrow (1+p)xy$.
\end{itemize}

In each case, the insertion of $nn$ corresponds to one substitution rule in $G$. Therefore, the action of $D_{G}$
on the set of weights of Stirling permutations in $\mq_{n-1}^{(1)}$  gives the set of weights of Stirling permutations in $\mq_{n}^{(1)}$.
This yields the desired result, and so we complete the proof.
\end{proof}

We now collect several well-studied Eulerian-type polynomials.  Let
  $$B_n(x,p,q)=\sum_{\pi\in \msn^B}x^{\exc(\pi)+\single(\pi)}p^{\negg(\pi)}q^{\cyc(\pi)}$$
 be a $(p,q)$-Eulerian polynomials of type $B$. 
The {\it types $A$ and $B$ derangement polynomials} are respectively defined by
$$d_n(x)=\sum_{\pi\in \mathcal{D}_n}x^{\exc(\pi)},~d_n^B(x)=\sum_{\pi\in \mathcal{D}_n^B}x^{\exc(\pi)},$$
where $\mathcal{D}_n=\{\pi\in \msn: \fix(\pi)=0\}$ and $\mathcal{D}_n^B=\{\pi\in \msn^B: \fix(\pi)=0\}$.
These polynomials have been extensively studied, see~\cite{Chow09,Zeng2020,Ma24} for recent progress on this subject.

\begin{corollary}
We have 
$$B_n(x,p,q)=\sum_{\sigma\in \mq_{n+1}^{(1)}}x^{\operatorname{ap}(\sigma)}p^{\even(\pi)}q^{\operatorname{tr}(\sigma)},$$
$$d_n(x)=\sum_{\substack{\pi\in \mq_{n+1}^{(1)}\\ \operatorname{bk}_2(\pi)=0\\\even(\pi)=0}}x^{\operatorname{pap}(\pi)},~d_n^B(x)=\sum_{\substack{\pi\in \mq_{n+1}^{(1)}\\ \operatorname{bk}_2(\pi)=0}}x^{\operatorname{pap}(\pi)}.$$
\end{corollary}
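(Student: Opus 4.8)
The plan is to obtain all three displayed formulas as one-line specializations of the six-variable equidistribution in Theorem~\ref{BNQN}, reading its right-hand side through the statistic dictionary $\exc\leftrightarrow\operatorname{pap}$, $\aexc\leftrightarrow\operatorname{cap}$, $\single\leftrightarrow\operatorname{impap}$, $\fix\leftrightarrow\operatorname{bk}_2$, $\negg\leftrightarrow\even$, $\cyc\leftrightarrow\operatorname{tr}$. Since Theorem~\ref{BNQN} is a genuine polynomial identity in the indeterminates $x,y,s,t,p,q$, I am free to assign them arbitrary values (including $0$) and to merge two of them; each named polynomial below is recovered by exactly one such assignment.

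For the first identity I would set $y=t=1$ and $s=x$. On the signed side the weight collapses to $x^{\exc(\pi)+\single(\pi)}p^{\negg(\pi)}q^{\cyc(\pi)}$, whose sum over $\msn^B$ is $B_n(x,p,q)$ by definition. The same substitution turns the Stirling-side weight into $x^{\operatorname{pap}(\sigma)+\operatorname{impap}(\sigma)}p^{\even(\sigma)}q^{\operatorname{tr}(\sigma)}$, and since $\operatorname{ap}(\sigma)=\operatorname{pap}(\sigma)+\operatorname{impap}(\sigma)$ this equals $x^{\operatorname{ap}(\sigma)}p^{\even(\sigma)}q^{\operatorname{tr}(\sigma)}$, which is precisely the first claim.

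For the derangement identities I would first put $y=s=q=1$ while keeping $x,t,p$, reducing Theorem~\ref{BNQN} to $\sum_{\sigma}x^{\operatorname{pap}(\sigma)}t^{\operatorname{bk}_2(\sigma)}p^{\even(\sigma)}=\sum_{\pi}x^{\exc(\pi)}t^{\fix(\pi)}p^{\negg(\pi)}$. The type $B$ derangement polynomial is the fixed-point-free part, so extracting the coefficient of $t^0$ (equivalently setting $t=0$, $p=1$) isolates $\fix(\pi)=0$ on the right and $\operatorname{bk}_2(\sigma)=0$ on the left, giving $d_n^B(x)=\sum_{\pi\in\mathcal{D}_n^B}x^{\exc(\pi)}=\sum_{\sigma\in\mq_{n+1}^{(1)},\,\operatorname{bk}_2(\sigma)=0}x^{\operatorname{pap}(\sigma)}$. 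For type $A$ I would additionally set $p=0$, which further imposes $\negg(\pi)=0$ on the right and $\even(\sigma)=0$ on the left.

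The one genuinely non-formal point — and the step I expect to have to argue carefully — is verifying that, once restricted to $\negg(\pi)=0$, the cycle-style excedance $\exc(\pi)=\#\{i:\pi(|\pi(i)|)>\pi(i)\}$ coincides with the ordinary excedance number defining $d_n(x)$. When $\pi$ has no negative entries it restricts to an honest permutation of $[n]$ with $|\pi(i)|=\pi(i)$, so the condition becomes $\pi(\pi(i))>\pi(i)$; reindexing by $k=\pi(i)$, which is a bijection of $[n]$, rewrites the count as $\#\{k:\pi(k)>k\}$, the usual excedance statistic. Hence $\sum_{\negg(\pi)=0,\,\fix(\pi)=0}x^{\exc(\pi)}=\sum_{\pi\in\mathcal{D}_n}x^{\exc(\pi)}=d_n(x)$, which finishes the second identity. (One may also note the automatic consistency $\negg(\pi)=0\Rightarrow\single(\pi)=0$, matching $\even(\sigma)=0\Rightarrow\operatorname{impap}(\sigma)=0$, so that killing $s$ costs nothing in the type $A$ case.) Everything beyond the excedance reduction is bookkeeping of which indeterminate carries which statistic.
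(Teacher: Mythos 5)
Your proof is correct and is exactly the intended argument: the paper states this corollary without proof, as an immediate specialization of Theorem~\ref{BNQN}, which is precisely what you carry out (with the same variable assignments forced by the statistic dictionary). Your extra care in checking that the cycle-style excedance $\#\{i:\pi(\pi(i))>\pi(i)\}$ agrees with the usual excedance count on ordinary permutations, via the reindexing $k=\pi(i)$, is a worthwhile detail the paper leaves implicit.
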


The {\it $(p,q)$-Eulerian polynomials} $A_n(x,p,q)$ are defined by
\begin{equation*}\label{Anxpq-def}
A_n(x,p,q)=\sum_{\pi\in\msn}x^{\exc(\pi)}p^{\fix(\pi)}q^{\cyc(\pi)}.
\end{equation*}
Combining~\cite[Theorem~5.2]{Ma24} and Theorem~\ref{BNQN}, we get the following result.
\begin{corollary}\label{corl2}
We have
\begin{equation*}\label{Bxy-EGF}
\sum_{\sigma\in \mq_{n+1}^{(1)}}x^{\operatorname{pap}(\sigma)}y^{\operatorname{cap}(\sigma)}s^{\operatorname{impap}(\sigma)}t^{\operatorname{bk}_2(\sigma)}p^{\even(\pi)}q^{\operatorname{tr}(\sigma)}
=(1+p)^ny^nA_n\left(\frac{x}{y},\frac{t+sp}{y+py},q\right).
\end{equation*}
\end{corollary}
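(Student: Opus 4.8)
The plan is to obtain Corollary~\ref{corl2} by chaining the already-established Theorem~\ref{BNQN} with the cited evaluation \cite[Theorem~5.2]{Ma24}, and then to check that the two sides are genuinely compatible so that the normalization $(1+p)^ny^n$ and the arguments of $A_n$ are the correct ones.

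\textbf{Step 1 (reduction to a signed-permutation sum).} The left-hand side of Corollary~\ref{corl2} is verbatim the six-variable polynomial on the left of Theorem~\ref{BNQN}. Applying that theorem rewrites it as the type $B$ sum
\[
\sum_{\pi\in \msn^B}x^{\exc(\pi)}y^{\aexc(\pi)}s^{\single(\pi)}t^{\fix(\pi)}p^{\negg(\pi)}q^{\cyc(\pi)}.
\]
Thus everything reduces to evaluating this signed-permutation sum, which is exactly the content of \cite[Theorem~5.2]{Ma24}; substituting its value yields the claimed closed form and finishes the proof. So the only real work is to confirm that \cite[Theorem~5.2]{Ma24} produces precisely $(1+p)^ny^nA_n\!\left(\tfrac{x}{y},\tfrac{t+sp}{y+py},q\right)$.

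\textbf{Step 2 (compatibility of the two sides).} To see that the target is the right normalization, I would expand the right-hand side from the definition $A_n(x,p,q)=\sum_{\pi\in\msn}x^{\exc(\pi)}p^{\fix(\pi)}q^{\cyc(\pi)}$. Note first that for ordinary $\pi\in\msn$ the general excedance statistic $\pi(|\pi(i)|)>\pi(i)$ reduces, via the reindexing $j=\pi(i)$, to the usual excedance count, so $\exc(\pi)+\aexc(\pi)+\fix(\pi)=n$ holds on $\msn$. Using this together with $y+py=y(1+p)$, the prefactors $y^n$ and $(1+p)^n$ absorb the two denominators and give
\[
(1+p)^{n}y^{n}A_n\!\left(\tfrac{x}{y},\tfrac{t+sp}{y+py},q\right)=\sum_{\pi\in\msn}(1+p)^{\exc(\pi)+\aexc(\pi)}x^{\exc(\pi)}y^{\aexc(\pi)}(t+sp)^{\fix(\pi)}q^{\cyc(\pi)}.
\]
This is to be matched against the signed sum of Step~1 through the absolute-value fibration $\pi\mapsto|\pi|$ of $\msn^B$ onto $\msn$. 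Two features make the match transparent: each cycle of $|\pi|$ is a single $\pm$-orbit, so $\cyc(\pi)=\cyc(|\pi|)$ and $q^{\cyc}$ passes through the fibration unchanged; and over a fixed point of $|\pi|$ the two sign choices give either a genuine fixed point (weight $t$) or a negative singleton (weight $sp$), contributing the factor $t+sp$ per fixed point, i.e.\ $(t+sp)^{\fix(|\pi|)}$.

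\textbf{Step 3 (the main obstacle).} The substance lies entirely in \cite[Theorem~5.2]{Ma24}, namely in showing that summing over the signs on the \emph{non-fixed} positions turns the signed excedance data into $(1+p)^{\exc+\aexc}x^{\exc}y^{\aexc}$. This does not factor position by position, since the signed excedance condition $\pi(|\pi(i)|)>\pi(i)$ and the orbit structure depend on the entire sign pattern around each cycle of $|\pi|$. The efficient route, which I would follow, is through the grammar $G$ of~\eqref{Ixyst-G}: by \cite[Lemma~5.3]{Ma24} the signed sum equals $D_G^n(I)/I$, and since $D_G(x)=D_G(y)=D_G(s)=D_G(t)=(1+p)xy$, every pairwise difference among $x,y,s,t$ is conserved by $D_G$. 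These conserved quantities permit a change of grammar collapsing $G$ onto the grammar governing $A_n(x,p,q)$ under $x\mapsto x/y$ and $t+sp\mapsto$ the fixed-point variable, with the degree homogenization and the $(1+p)xy$ rule supplying the prefactor $(1+p)^{n}y^{n}$. Verifying this collapse, together with the boundary rule $D_G(I)=qI(t+sp)$ handling the fixed-point/singleton weight, is the technical heart of the argument and is precisely what \cite[Theorem~5.2]{Ma24} delivers; granting it, Corollary~\ref{corl2} follows immediately from Step~1.
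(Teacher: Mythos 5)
Your proposal is correct and follows exactly the paper's own route: the paper proves Corollary~\ref{corl2} in one line by combining Theorem~\ref{BNQN} with the cited evaluation \cite[Theorem~5.2]{Ma24}, which is precisely your Step~1 plus the granted citation. Your Steps~2--3 (expanding the right-hand side via $\exc+\aexc+\fix=n$ on $\msn$ and sketching the grammar argument behind the cited theorem) are sound additional verification beyond what the paper records, but they do not change the approach.
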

\section{Box sorting algorithm and standard Young tableaux}\label{Section4}
\subsection{Preliminaries}
\hspace*{\parindent}

The {\it Weyl algebra} $W$ is the unital algebra generated by two symbols $D$ and $U$ satisfying the commutation relation
$DU-UD=I$,
where $I$ is the identity which we identify with ``1".
Any word $w$ in the letters $U,D$ can always be brought
into {\it normal ordered form} where all letters $D$ stand to the right of all the letters $U$.
A famous example of $W$ is given by the substitution:
$D\rightarrow \frac{\mathrm{d}}{\mathrm{d}x},~U\rightarrow x$. Except where otherwise indicated, we always let $D=\frac{\mathrm{d}}{\mathrm{d}x}$.
The expansion of $(xD)^n$ has been studied as early as 1823 by Scherk~\cite[Appendix~A]{Blasiak10}. He found that
\begin{equation}\label{Stirling-def}
(xD)^n=\sum_{k=0}^n\Stirling{n}{k}x^kD^k.
\end{equation}
According to~\cite[Proposition~A.2]{Blasiak10}, one has
\begin{equation*}\label{stirling-def}
(\mathrm{e}^xD)^n=\mathrm{e}^{nx}\sum_{k=0}^n\stirling{n}{k}D^k,
\end{equation*}
where $\stirling{n}{k}$ is the (signless) Stirling number of the first
kind, i.e.,
the number of permutations of $[n]$ with $k$ cycles.
Many generalizations of~\eqref{Stirling-def} occur naturally in quantum physics, combinatorics and algebra.
The reader is referred to~\cite{Blasiak10,Schork21} for surveys on this topic.

Throughout this paper, we always let $c:=c(x)$ and $f:=f(x)$ be two smooth functions in the indeterminate $x$.
We adopt the convention that $c_k=D^kc$ and $\mathbf{f}_k=D^kf$ for $k\geqslant 0$. Set $\mathbf{f}_0=f$ and $c_0=c$, where $D=\frac{\mathrm{d}}{\mathrm{d}x}$.
The first few $(cD)^nf$ are given as follows:
\begin{align*}
(cD)f&=(c)  {\mathbf{f}}_1,~(cD)^2f=(c c_1 )  {\mathbf{f}}_1 +(c^2 )  {\mathbf{f}}_2,\\
(cD)^3f&=(c c_1^2  +c^2 c_2 )  {\mathbf{f}}_1 +(3c^2 c_1 )  {\mathbf{f}}_2 +(c^3 )  {\mathbf{f}}_3,\\
(cD)^4f&=(c c_1^3  +4c^2 c_1 c_2  +c^3 c_3 )  {\mathbf{f}}_1 +(7c^2 c_1^2  +4c^3 c_2 )  {\mathbf{f}}_2 +(6c^3 c_1 )  {\mathbf{f}}_3 +(c^4 )  {\mathbf{f}}_4.
\end{align*}

For $n\geqslant 1$, we define
\begin{equation}\label{Ank-def}
(cD)^nf =\sum_{k=1}^nF_{n,k}\mathbf{f}_k.
\end{equation}
Note that $F_{n,k}=F_{n,k}(c,c_1,\ldots,c_{n-k})$ is a function of $c,c_1,\ldots,c_{n-k}$.
In particular, $F_{1,1}=c$, $F_{2,1}=cc_1$ and $F_{2,2}=c^2$.
Clearly, $F_{n+1,1}=cDF_{n,1}$, $F_{n,n}=c^n$ and for $2\leqslant k\leqslant n$, we have
$$F_{n+1,k}=cF_{n,k-1}+cDF_{n,k}.$$

By induction, Comtet~\cite{Comtet73} found an explicit formula of $F_{n,k}$.
Recently, Briand-Lopes-Rosas~\cite{Briand20}
gave a survey on the combinatorial and arithmetic properties of the coefficients $F_{n,k}$, which can be summarized as follows.
\begin{proposition}[{\cite{Briand20}}]\label{prop02}
Let $F_{n,k}$ be defined by~\eqref{Ank-def}.
There exist positive integers $a(n,\lambda)$ such that
\begin{equation*}
F_{n,k}=\sum_{\lambda\vdash n-k} a(n,\lambda)c^{n-\ell(\lambda)}c_{\lambda},
\end{equation*}
where $\lambda$ runs over all partitions of $n-k$. The Stirling numbers of the first and second kinds, and the Eulerian numbers can be respectively expressed as follows:
\begin{equation*}\label{Fnk}
\stirling{n}{k}=\sum_{\lambda\vdash n-k} a(n,\lambda),~
\Stirling{n}{k}=a(n,1^{n-k}),~
\Eulerian{n}{k}=\sum_{\ell(\lambda)=n-k} a(n,\lambda).
\end{equation*}
\end{proposition}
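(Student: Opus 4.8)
The plan is to treat $c=c_0,c_1,c_2,\dots$ as algebraically independent indeterminates on which $D$ acts as the derivation with $Dc_j=c_{j+1}$, and to run a single induction on $n$ driven by the recursion $F_{n+1,k}=cF_{n,k-1}+cDF_{n,k}$ (with the conventions $F_{n,0}:=0$ and $F_{n,n}=c^n$). Writing $c_\lambda=\prod_i c_{\lambda_i}$ and $c_\emptyset=1$, a monomial $c^{\,n-\ell(\lambda)}c_\lambda$ determines $\lambda$ uniquely as the multiset of its positive derivative-orders, so distinct partitions give distinct monomials; proving the stated shape of $F_{n,k}$ therefore reduces to tracking how the coefficients transform. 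First I would settle the base cases $F_{1,1}=c=c^{1-\ell(\emptyset)}c_\emptyset$ (so $a(1,\emptyset)=1$) and $F_{n,n}=c^n$ (so $a(n,\emptyset)=1$), and observe that each recursion step keeps the total number of factors equal to $n$ and the total derivative-count equal to $n-k$, so only monomials $c^{\,n-\ell(\lambda)}c_\lambda$ with $\lambda\vdash n-k$ can occur.

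Next I would convert the $F$-recursion into a recursion for the $a(n,\lambda)$. The term $cF_{n,k-1}$ merely raises a $c$-power and so contributes $a(n,\mu)$ to $a(n+1,\mu)$ for every $\mu\vdash(n+1)-k$. Applying $cD$ to $c^{\,n-\ell}c_\lambda$ creates two families of monomials: differentiating one of the $n-\ell(\lambda)$ plain factors $c$ appends a new part of size $1$ (so $\ell\mapsto\ell+1$), with combinatorial factor $n-\ell(\lambda)$; differentiating a factor $c_{\lambda_i}$ increments that part (leaving $\ell$ fixed), with factor equal to its multiplicity. Reading this backwards yields, schematically,
\[
a(n+1,\mu)=a(n,\mu)+\bigl(n-\ell(\mu)+1\bigr)\,a\!\left(n,\mu\setminus\{1\}\right)+\sum_{\lambda\,\uparrow\,\mu} m_\lambda\,a(n,\lambda),
\]
where $\mu\setminus\{1\}$ deletes one part equal to $1$, each $m_\lambda$ is a positive integer, and $\lambda\uparrow\mu$ means $\mu$ arises from $\lambda$ by raising one part by $1$. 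All coefficients are non-negative integers, so by induction every $a(n,\lambda)$ is a non-negative integer. For strict positivity I would use reachability: the first column is $F_{n,1}=(cD)^{n-1}c$, and starting from $\emptyset$ one can build any $\lambda\vdash m$ using exactly $m$ of the two operations above (each factor is $\ge 1$, since a partition of size $n-1$ has at most $n-1<n$ parts), reaching $\lambda$ at level $n=m+1$; the persistent term $a(n,\mu)$ then forces $a(n,\lambda)\ge a(m+1,\lambda)\ge 1$ for all larger $n$.

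For the two Stirling specializations I would substitute explicit functions and extract coefficients. Taking $c=x$ gives $c_1=1$ and $c_j=0$ for $j\ge 2$, so $c_\lambda$ survives only for $\lambda=1^{\,n-k}$, where $c^{\,n-\ell}c_\lambda=x^{k}$; comparing $F_{n,k}\big|_{c=x}=a(n,1^{\,n-k})x^{k}$ with $(xD)^n=\sum_k\Stirling{n}{k}x^kD^k$ from~\eqref{Stirling-def} gives $a(n,1^{\,n-k})=\Stirling{n}{k}$. Taking $c=\mathrm e^{x}$ gives $c_j=\mathrm e^{x}$ for all $j$, so every monomial collapses to $c^{\,n-\ell}c_\lambda=\mathrm e^{nx}$, whence $F_{n,k}\big|_{c=\mathrm e^{x}}=\mathrm e^{nx}\sum_{\lambda\vdash n-k}a(n,\lambda)$; comparing with $(\mathrm e^{x}D)^n=\mathrm e^{nx}\sum_k\stirling{n}{k}D^k$ yields $\sum_{\lambda\vdash n-k}a(n,\lambda)=\stirling{n}{k}$.

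For the Eulerian identity, which groups the $a(n,\lambda)$ by number of parts, I would study the generating polynomial $Q_n(t):=\sum_{\lambda}a(n,\lambda)t^{\ell(\lambda)}$, summed over all partitions of size at most $n-1$; this is the value of $\sum_k F_{n,k}$ under the formal specialization $c\mapsto 1$, $c_j\mapsto t$ $(j\ge1)$. Propagating the $F$-recursion through this specialization (the $cF_{n,k-1}$ part reproduces $Q_n$, while $cD$ sends a monomial with $\ell$ parts to $(n-\ell)t^{\ell+1}+\ell t^{\ell}$ times itself) I expect to obtain
\[
Q_{n+1}(t)=(1+nt)\,Q_n(t)+t(1-t)\,Q_n'(t),\qquad Q_1(t)=1,
\]
which is exactly the recursion for the Eulerian polynomials $A_n(t)=\sum_k\Eulerian{n}{k}t^k$. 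Hence $Q_n=A_n$, and reading off coefficients identifies the sum of $a(n,\lambda)$ over partitions with a prescribed number of parts with the corresponding Eulerian number (matching the stated form via the reflection symmetry $\Eulerian{n}{k}=\Eulerian{n}{\,n-1-k}$). The main obstacle throughout is the bookkeeping: pinning down the coefficient recursion for $a(n,\lambda)$ with the correct multiplicities, verifying that the induced recursion for $Q_n$ coincides with the Eulerian recursion, and upgrading mere non-negativity to positivity via the reachability argument rather than leaving it as an existence claim.
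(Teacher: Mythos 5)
Your proposal is correct, but there is nothing in the paper to compare it against: Proposition~\ref{prop02} is stated as a quoted result, with the proof delegated entirely to Briand--Lopes--Rosas \cite{Briand20} (and ultimately Comtet \cite{Comtet73}); the paper only records the surrounding facts \eqref{Stirling-def} and the $(\mathrm{e}^xD)^n$ expansion that you also use. Your argument is therefore a genuinely self-contained substitute for the citation, and it holds up: the unified recursion $F_{n+1,k}=cF_{n,k-1}+cDF_{n,k}$ (with $F_{n,0}=F_{n,n+1}=0$) in $\mathbb{Z}[c,c_1,c_2,\dots]$ does produce exactly the coefficient recursion you state, with the three nonnegative contributions you identify (persistence, appending a part $1$ with factor $n-\ell(\mu)+1$, incrementing a part with factor equal to its multiplicity); your reachability chain (build $\lambda\vdash m$ from $\emptyset$ in $m$ steps, each with a strictly positive factor since $\ell\le |\lambda|<n$ at every level, then ride the persistent term) correctly upgrades nonnegativity to positivity; the specializations $c=x$ and $c=\mathrm{e}^x$ give the second-kind and first-kind Stirling identities; and the formal specialization $c\mapsto 1$, $c_j\mapsto t$ turns the recursion into $Q_{n+1}=(1+nt)Q_n+t(1-t)Q_n'$, $Q_1=1$, which is the recursion for the Eulerian polynomials normalized to have constant term $1$. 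The one spot needing precision is the final identification: in this paper's convention $A_n(x)=\sum_k\Eulerian{n}{k}x^k$ counts descents with the boundary convention $\pi_{n+1}=0$, so $A_n(x)=x\,Q_n(x)$ rather than $Q_n(x)$, and $\Eulerian{n}{k}$ is the number of permutations with $k-1$ ordinary descents; consequently $\sum_{\ell(\lambda)=n-k}a(n,\lambda)=[t^{n-k}]Q_n$ equals $\Eulerian{n}{k}$ exactly by the palindromic symmetry you invoke, which in the paper's indexing reads $\Eulerian{n}{k}=\Eulerian{n}{n+1-k}$. With that bookkeeping pinned down (your numerics for $n=3$, namely $a(3,(1,1))=1$, $a(3,(2))=1$, $a(3,(1))=3$, $a(3,\emptyset)=1$, confirm all three identities), the proof is complete.
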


In~\cite{Han23}, Han-Ma first gave a simple proof of Comtet's formula via inversion sequences,
and then introduced $k$-Young tableaux and their $g$-indices. Using the indispensable $k$-Young tableaux, Han-Ma obtained
a unified combinatorial interpretations of Eulerian polynomials and second-order Eulerian
polynomials. In the rest of this paper, we shall give a substantial and original improvement of this idea.
\subsection{Box sorting algorithm}
\hspace*{\parindent}

Rota~\cite{Rota92} once said ``I will tell you shamelessly what my bottom line is: It is placing
balls into boxes". As discussed before, we always let $D=\frac{\mathrm{d}}{\mathrm{d}x}$ and $c=c(x)$.
In order to study the powers of $cD$, we shall introduce the box sorting algorithm.

An {\it ordered weak set partition} of $[n]$ is a list of pairwise disjoint subsets (maybe empty) of $[n]$
such that the union of these subsets is $[n]$. These subsets are called the {\it parts} of the partition.
A {\it weak composition} $\alpha$ of an integer $n$, denoted by $\alpha\models n$, with $m$ parts is a way of writing
$n$ as the sum of any sequence $\alpha=(\alpha_1,\alpha_2,\ldots,\alpha_m)$ of nonnegative integers.
Given $\alpha\models n$. The {\it Young weak composition diagram} of $\alpha$, also denoted by $\alpha$,
is the left-justified array of $n$ boxes
with $\alpha_i$ boxes in the $i$-th row. We follow the French convention, which
means that we number the rows from bottom to top, and the columns from left to
right. The box in the $i$-th row and $j$-th column is denoted by the pair $(i,j)$.
A {\it Young weak composition tableau} ($\YWCT$, for short) of $\alpha$ is obtained by placing the integers $\{1,2,\ldots,n\}$ into $n$ boxes of the diagram
such that each of those integers is used exactly once.
We will often identify an ordered weak set partition with the corresponding $\YWCT$.
It should be noted that there may be some empty boxes in $\YWCT$. In the following discussion,
we always put a special column of $n+1$ boxes at the left of $\YWCT$ or $\SYT$, and labelled by $0,1,2,\ldots, n$ from bottom to top, see Figure~\ref{fig3} and Table~\ref{tab:dummy-1} for instances.
\begin{figure}[!ht]
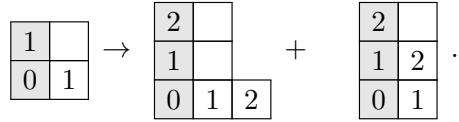
\label{fig3}
\renewcommand{\arraystretch}{2}
\begin{center}
\begin{tabular}{c}
    \begin{ytableau}
    *(gray!20) 1& \\
    *(gray!20) 0 & 1 \\
    \end{ytableau}
\end{tabular}$\rightarrow$
\begin{tabular}{c}
    \begin{ytableau}
    *(gray!20) 2& \\
    *(gray!20) 1& \\
    *(gray!20) 0 & 1&2 \\
    \end{ytableau}\\
\end{tabular}+~~
~~\quad \begin{tabular}{c}
    \begin{ytableau}
    *(gray!20) 2& \\
    *(gray!20) 1& 2\\
    *(gray!20) 0 & 1 \\
    \end{ytableau}
\end{tabular}.
\end{center}
\caption{An illustration of the change of weights: $cc_1\rightarrow c^2c_2+cc_1^2$}\label{fig3}
\end{figure}

The following label schema is fundamental.
\begin{LSp}
Let $p$ be an ordered weak set partition of $[n]$.
We give a labeling of $p$ as follows.
Label the $i$-th subset by the subscript $c_{(i-1)}$,
label a subset with $i$ elements by a superscript $c_i$, where $i\geqslant 1$. Moreover, if the $i$-th subset is empty, we always label it by a superscript $c$.
The {\it weight} of $p$ is defined as the product of the superscript labels.
\end{LSp}

We can rewrite $(cD)^n c$ as follows:
\begin{equation}\label{cdn}
\left(c_{(n)}D_{(n)}\right)\left(c_{(n-1)}D_{(n-1)}\right) \cdots \left(c_{(2)}D_{(2)}\right) \left(c_{(1)}D_{(1)}\right) c_{(0)},
\end{equation}
where $c_{(0)}=c_{(i)}=c$ and $D_{(i)}=D$ for all $i\in [n]$.
A crucial observation is that the differential operator $D_{(i)}$ in~\eqref{cdn} can only applied to $c_{(k)}$, where $0\leqslant k\leqslant i-1$.
When $n=1$, we have $(cD)c=\left(c_{(1)}D_{(1)}\right) c_{(0)}=cc_1$. When $n=2$,
we have
$(cD)^2c=\left(c_{(2)}D_{(2)}\right) \left(c_{(1)}D_{(1)}\right) c_{(0)}=cc_1^2+c^2c_2$.

Next, we introduce the box sorting algorithm, designed to transform a term in the expansion of~\eqref{cdn} into an ordered weak set partition, which can be represented by a $\YWCT$.
When a new term $\left(c_{(i)}D_{(i)}\right)$ is multiplied,
the procedure can be summarized as follows:
\begin{itemize}
  \item When applying $D_{(i)}$ to $c_{(j)}$, it corresponds to the insertion of the element $i$ into the box with the subscript $c_{(j)}$;
  \item Multiplying by $c_{(i)}$ corresponds to the opening of a new empty box $\{\}^{c}_{c_{(i)}}$.
\end{itemize}

We now provide a detailed description of the {\it box sorting algorithm}.
Start with an empty box $(\{\}^c_{c_{(0)}})$. We proceed as follows:
\begin{itemize}
  \item [$BS1$:] When $n=1$, we first insert the element $1$ to the empty box, which corresponds to the operation $D_{(1)}(c_{(0)})$.
We then open a new empty box, which corresponds to the multiplication by $c_{(1)}$. Thus we get $(\{1\}_{c_{(0)}}^{c_1},\{\}^c_{c_{(1)}})$.
  \item [$BS2$:] When $n=2$, we distinguish two cases: $(i)$ we first insert the element $2$ into the first box $\{1\}^{c_1}_{c_{(0)}}$, which corresponds to apply the operation $D_{(2)}$ to  $c_{(0)}$. We then open a new empty box, which corresponds to the multiplication by $c_{(2)}$; $(ii)$
We first insert the element $2$ into the empty box $\{\}^c_{c_{(1)}}$, which corresponds to apply the operation $D_{(2)}$ to $c_{(1)}$. We then open a new empty box, which corresponds to the multiplication by $c_{(2)}$.
Therefore, we get the following correspondences between ordered weak set partitions and their weights:
$$c^2c_2\leftrightarrow (\{1,2\}^{c_2}_{c_{(0)}},\{\}^c_{c_{(1)}},\{\}^c_{c_{(2)}}),~~cc_1^2\leftrightarrow(\{1\}^{c_1}_{c_{(0)}},\{2\}^{c_1}_{c_{(1)}},\{\}^c_{c_{(2)}}).$$
The process from $BS1$ to $BS2$ can be illustrated by Figure~\ref{fig3}.
  \item [$BS3$:] If all of the elements $[i-1]$ have already been inserted, then we consider the insertion of $i$, where $i\geqslant 3$.
Suppose that we insert the element $i$ into the $k$-th box, which has the label $\{\}^{c_\ell}_{c_{(k-1)}}$, where $1\leqslant k\leqslant i$.
Then this insertion corresponds to apply $D_{(i)}$ to $c_{(k-1)}$, and the labels of the $k$-th box become $\{\}^{c_{\ell}+1}_{c_{(k-1)}}$.  We then open a new empty box, which corresponds to the multiplication by $c_{(i)}$. When $i=3$, see Figures~\ref{fig4} and~\ref{fig5} for illustrations, where each empty box in the first column of a $\YWCT$ corresponds to an empty subset.
\end{itemize}

\begin{figure}[!ht]
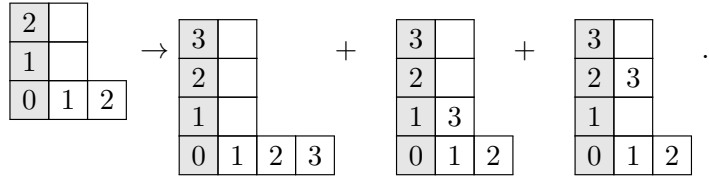
\label{fig4}
\renewcommand{\arraystretch}{2}
\begin{center}
\begin{tabular}{c}
    \begin{ytableau}
    *(gray!20) 2& \\
    *(gray!20) 1& \\
    *(gray!20) 0 & 1&2 \\
    \end{ytableau}\\
\end{tabular}$\rightarrow$
  \begin{ytableau}
    *(gray!20) 3&\\
    *(gray!20) 2&\\
    *(gray!20) 1&\\
    *(gray!20) 0 & 1 & 2 &3  \\
    \end{ytableau}+~~\quad
   \begin{ytableau}
    *(gray!20) 3&\\
    *(gray!20) 2&\\
    *(gray!20) 1 & 3\\
    *(gray!20) 0 & 1 & 2  \\
    \end{ytableau}+~~\quad
    \begin{ytableau}
    *(gray!20) 3&\\
    *(gray!20) 2 & 3\\
    *(gray!20) 1 &\\
    *(gray!20) 0 & 1 & 2  \\
    \end{ytableau}~~.
\end{center}
\caption{The insertion of $3$ into $(\{1,2\}^{c_2}_{c_{(0)}},\{\}^c_{c_{(1)}},\{\}^c_{c_{(2)}})$}\label{fig4}
\end{figure}
\begin{figure}[!ht]
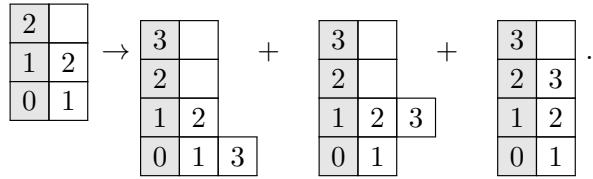
\label{fig5}
\renewcommand{\arraystretch}{2}
\begin{center}
\begin{tabular}{c}
    \begin{ytableau}
    *(gray!20) 2& \\
    *(gray!20) 1& 2\\
    *(gray!20) 0 & 1 \\
    \end{ytableau}\\
\end{tabular}$\rightarrow$
   \begin{ytableau}
    *(gray!20) 3&\\
    *(gray!20) 2&\\
    *(gray!20) 1 & 2\\
    *(gray!20) 0 & 1 & 3  \\
    \end{ytableau}+~~\quad
    \begin{ytableau}
    *(gray!20) 3&\\
    *(gray!20) 2 & \\
    *(gray!20) 1 &2&3\\
    *(gray!20) 0 & 1   \\
    \end{ytableau}+~~\quad
  \begin{ytableau}
    *(gray!20) 3&\\
    *(gray!20) 2&3\\
    *(gray!20) 1&2\\
    *(gray!20) 0 & 1   \\
    \end{ytableau}~~~.
\end{center}
\caption{The insertion of $3$ into $(\{1\}^{c_1}_{c_{(0)}},\{2\}^{c_1}_{c_{(1)}},\{\}^c_{c_{(2)}})$}\label{fig5}
\end{figure}

\begin{definition}\label{defOWP}
Let $\operatorname{OWP}_{n}$
denote the collection of ordered weak set partitions of $[n]$ into $n+1$ blocks $B_0\cup B_1\cup\cdots \cup B_n$ for which the following conditions hold:
$(a)$ $1\in B_0$; $(b)$
if $B_i$ is nonempty, then its minimum larger than $i$, where $1\leqslant i\leqslant n$.
\end{definition}

Given $p\in\operatorname{OWP}_{n}$. It is clear that the $(n+1)$-th block $B_n$ must be empty.
Denote by $w_i(p)$ the number of blocks in $p$ with $i$ elements.
The {\it weight function} of $p$ is defined by
\begin{equation}\label{wp}
w(p)=\prod_{i=0}^nc_i^{w_i(p)}.
\end{equation}
By the box sorting algorithm, we immediately get the following result.
\begin{lemma}\label{Lemma-cdnc}
For $n\geqslant 1$, we have $(cD)^nc=\sum_{p\in\operatorname{OWP}_{n}}w(p)$.
\end{lemma}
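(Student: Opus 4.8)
The plan is to prove Lemma~\ref{Lemma-cdnc} by induction on $n$, showing that the box sorting algorithm faithfully tracks the differentiation process so that the terms produced by applying $(cD)$ correspond bijectively to the ordered weak set partitions in $\operatorname{OWP}_n$, with the correct weights. First I would verify the base case $n=1$ directly: $(cD)c = cc_1$, and $\operatorname{OWP}_1$ consists of the single partition with $1\in B_0$ and $B_1=\emptyset$, whose weight by~\eqref{wp} is $c_1^{w_1}c_0^{w_0} = c_1\cdot c = cc_1$ (one block of size one, one empty block). This matches, so the base case holds.

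For the inductive step, I would assume $(cD)^{n-1}c=\sum_{p\in\operatorname{OWP}_{n-1}}w(p)$ and analyze the effect of applying one more factor $(cD)$, which is the operator $c_{(n)}D_{(n)}$ in the notation of~\eqref{cdn}. The key point is that by the Leibniz/product structure, applying $D=\frac{\mathrm{d}}{\mathrm{d}x}$ to a monomial $\prod_{i=0}^{n-1}c_i^{w_i(p)}$ distributes over the factors: differentiating a factor $c_j$ (raising its index to $c_{j+1}$) corresponds exactly to the box sorting operation $BS3$ of inserting the new element $n$ into a block of size $j$ (turning its superscript label $c_j$ into $c_{j+1}$). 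Multiplying afterward by the leading $c=c_{(n)}$ corresponds to opening the new empty block $B_n$. I would argue that each choice of which block receives the element $n$ produces a distinct term, and these choices enumerate precisely the partitions of $\operatorname{OWP}_n$: the block $B_i$ that receives $n$ must already exist and inserting $n$ preserves the defining conditions, since $n$ is the largest element inserted so far and hence is automatically larger than any index $i\leqslant n-1$, while $B_0$ retains $1$ as required.

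The main obstacle—and the step deserving the most care—is verifying that the map between ``terms of the derivative'' and ``elements of $\operatorname{OWP}_n$'' is a genuine bijection that respects conditions $(a)$ and $(b)$ of Definition~\ref{defOWP}. Concretely, I must check that conditions $(a)$ and $(b)$ are preserved under insertion: condition $(a)$ is immediate because $1$ is never moved out of $B_0$; condition $(b)$ requires that if $n$ is inserted into block $B_i$ (for $1\leqslant i\leqslant n-1$), then the minimum of the resulting block still exceeds $i$, which holds because $n>n-1\geqslant i$, and that the newly opened block $B_n$ is empty (so $(b)$ holds vacuously for it). I would also confirm that no two distinct insertion choices yield the same labelled partition, guaranteeing injectivity, and that every $p'\in\operatorname{OWP}_n$ arises this way by deleting $n$ and the empty block $B_n$ to recover a unique $p\in\operatorname{OWP}_{n-1}$, giving surjectivity.

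Finally, I would confirm the weight bookkeeping is consistent with~\eqref{wp}. When $n$ is inserted into a block that had $j$ elements, that block's contribution changes from $c_j$ to $c_{j+1}$, and the freshly opened empty block contributes a factor $c_0=c$; this is exactly the change in $\prod_{i=0}^n c_i^{w_i(p')}$ relative to $\prod_{i=0}^{n-1}c_i^{w_i(p)}$. Since the analysis of the label schema and the algorithm steps $BS1$, $BS2$, $BS3$ has already been developed in the text, the inductive verification reduces to matching each differentiation branch with one insertion branch and confirming the weights agree term by term, which completes the proof.
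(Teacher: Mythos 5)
Your proof is correct and follows essentially the same route as the paper: the paper's argument is precisely the box sorting algorithm read as an induction on the inserted element, where each application of $D_{(n)}$ via the Leibniz rule selects an existing block (factor) to differentiate and the multiplication by $c_{(n)}$ opens the empty block $B_n$. You have merely made explicit the bijection and weight bookkeeping that the paper treats as immediate, including the verification of conditions $(a)$ and $(b)$ of Definition~\ref{defOWP}, which is a faithful elaboration rather than a different method.
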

\subsection{Main results}
Given $T\in\SYT(n)$. We define $w_i(T)$ to be the number of rows in $T$ with $i$ elements.
Let $\ell(\lambda(T))$ be the number of rows of $T$, where $\lambda$ is the shape of $T$.
Then $\ell(\lambda(T))=\sum_{i=1}^nw_i(T)$ and $n=\sum_{i=1}^niw_i(T)$.
The {\it weight function} of $T$ is defined by
\begin{equation}\label{wT}
w(T)=c^{n+1-\ell(\lambda(T))}\prod_{i=1}^nc_i^{w_i(T)}.
\end{equation}
\renewcommand{\arraystretch}{2}
\begin{center}
\begin{table}[ht!]
  \caption{$\phi^{-1}(T)=\left\{p\in\operatorname{OWP}_{3}| \phi(p)=T\right\}$, where $T\in\SYT(3)$}\label{tab:dummy-1}
  {
\begin{tabular}{ccccc|c}
$T$&&$\stackrel{\phi^{-1}}{\Longrightarrow}$&&$P$&count\\
\hline
    \begin{ytableau}
    *(gray!20) 3\\
    *(gray!20) 2\\
    *(gray!20) 1\\
    *(gray!20) 0 & 1 & 2 &3  \\
    \end{ytableau}
    & $\Longrightarrow$
    &
    \begin{ytableau}
    *(gray!20) 3&\\
    *(gray!20) 2&\\
    *(gray!20) 1&\\
    *(gray!20) 0 & 1 & 2 &3  \\
    \end{ytableau}
    &
    & $\left( \{1, 2, 3 \}, \{ \}, \{ \},\{\} \right)$
    &1\\
\hline
    \begin{ytableau}
    *(gray!20) 3\\
    *(gray!20) 2\\
    *(gray!20) 1 & 2\\
    *(gray!20) 0 & 1 & 3  \\
    \end{ytableau}
    & $\Longrightarrow$
    &
    \begin{ytableau}
    *(gray!20) 3&\\
    *(gray!20) 2&\\
    *(gray!20) 1 & 2\\
    *(gray!20) 0 & 1 & 3  \\
    \end{ytableau}+~~
    \begin{ytableau}
    *(gray!20) 3&\\
    *(gray!20) 2&\\
    *(gray!20) 1 & 2 & 3\\
    *(gray!20) 0 & 1  \\
    \end{ytableau}
    &
    & $( \{1, 3 \}, \{ 2 \}, \{  \},\{\} ), ~ ( \{1 \}, \{ 2, 3 \}, \{  \} ,\{\})$
    &2\\
\hline
    \begin{ytableau}
    *(gray!20) 3\\
    *(gray!20) 2\\
    *(gray!20) 1 & 3\\
    *(gray!20) 0 & 1 & 2  \\
    \end{ytableau}
    & $\Longrightarrow$
    &
    \begin{ytableau}
    *(gray!20) 3&\\
    *(gray!20) 2&\\
    *(gray!20) 1 & 3\\
    *(gray!20) 0 & 1 & 2  \\
    \end{ytableau}+~~
    \begin{ytableau}
    *(gray!20) 3&\\
    *(gray!20) 2 & 3\\
    *(gray!20) 1& \\
    *(gray!20) 0 & 1 & 2  \\
    \end{ytableau}
    &
    & $\left( \{1, 2 \}, \{ 3 \}, \{  \} ,\{\}), ~ ( \{1, 2 \}, \{  \}, \{ 3 \},\{\} \right)$
    &2\\
\hline
    \begin{ytableau}
    *(gray!20) 3\\
    *(gray!20) 2 & 3\\
    *(gray!20) 1 & 2\\
    *(gray!20) 0 & 1\\
    \end{ytableau}
    & $\Longrightarrow$
    &
    \begin{ytableau}
    *(gray!20) 3&\\
    *(gray!20) 2 & 3\\
    *(gray!20) 1 & 2\\
    *(gray!20) 0 & 1\\
    \end{ytableau}
    &
    & $\left( \{1 \}, \{ 2 \}, \{ 3 \},\{\} \right)$
    &1\\
    \hline
\end{tabular}}
\end{table}
\end{center}
Let $\phi$ be the map from $\operatorname{OWP}_{n}$ to $\SYT(n)$, which is described as follows:
\begin{itemize}
  \item [$OS1$:] For $p\in\operatorname{OWP}_{n}$, let $Y$ be the corresponding $\YWCT$.
  Reorder the left-justified rows of $Y$ by their length in decreasing order from the bottom to the top, and delete all empty boxes.
  \item [$OS2$:]Rearrange the entries in each column in ascending order from the bottom to the top.
\end{itemize}
In view of~\eqref{wp} and~\eqref{wT}, we see that for any $p\in\operatorname{OWP}_{n}$, one has $\phi(p)\in\SYT(n)$ and
\begin{equation}\label{wp-SYT}
w(p)=w\left(\phi(p)\right).
\end{equation}

\begin{definition}
Given $T\in\SYT(n)$. Let
$\phi^{-1}(T)=\{p\in\operatorname{OWP}_{n}|~ \phi(p)=T\}$.
We call $\#\phi^{-1}(T)$ the {\it $g$-index} of $T$.
\end{definition}
Clearly, $\#\phi^{-1}(T)=1$ for $T\in \SYT(1)$ or $T\in\SYT(2)$.
The correspondences between $\SYT(3)$ and $\operatorname{OWP}_{3}$ are listed in Table~\ref{tab:dummy-1}.
For $T\in\SYT(n)$, let $T_i$ be the element in $\SYT(i)$ obtained from $T$ by deleting the $n-i$ elements $i+1,i+2,\ldots,n$.
We denote by $\operatorname{col}_k(T_i)$ the size of the $k$-th column of $T_i$.
\begin{theorem}\label{TSYTncol}
For $T\in\SYT(n)$, the $g$-index of $T$ can be computed as follows:
\begin{equation}\label{phiT}
\#\phi^{-1}(T)=\prod_{i=1}^n\sigma_i(T).
\end{equation}
where $\sigma_i(T)$ is defined by
\begin{equation*}
\sigma_i(T)=\left\{
  \begin{array}{ll}
   i-\operatorname{col}_1(T_i)+1, & \hbox{if $i$ is in the first column of $T$;} \\
    \operatorname{col}_{k}(T_i)-\operatorname{col}_{k+1}(T_i)+1, & \hbox{if $i$ is in the $(k+1)$-th column of $T$, where $k\geqslant 1$.}
  \end{array}
\right.
\end{equation*}
We call $\sigma_i(T)$ the $g$-index of the element $i$.
Then we have
\begin{equation}\label{cdnc-Thm}
(cD)^nc=\sum_{T\in\operatorname{SYT}{(n)}}\#\phi^{-1}(T)w(T)=\sum_{T\in\operatorname{SYT}{(n)}}\left(\prod_{i=1}^n\sigma_i(T)c_i^{w_i(T)}\right)c^{n+1-\ell(\lambda(T))}.
\end{equation}
\end{theorem}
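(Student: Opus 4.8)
The plan is to reduce everything to the $g$-index formula \eqref{phiT}, since \eqref{cdnc-Thm} then follows formally: by Lemma~\ref{Lemma-cdnc} one has $(cD)^nc=\sum_{p\in\operatorname{OWP}_n}w(p)$, and grouping the terms by the value of $\phi(p)$ together with the weight-preservation \eqref{wp-SYT} gives $(cD)^nc=\sum_{T\in\operatorname{SYT}(n)}\#\phi^{-1}(T)\,w(T)$; substituting the explicit weight \eqref{wT} produces the stated right-hand side. So I would concentrate on \eqref{phiT} and prove it by induction on $n$, peeling off the largest entry $n$.

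First I would record a column description of $\phi$ that makes the role of the largest entry transparent. Writing the blocks of $p$ as the increasing rows produced by the box sorting algorithm, I claim that the $c$-th column of $\phi(p)$ is exactly the set of $c$-th entries of those blocks having at least $c$ elements, listed in increasing order from bottom to top. This holds because OS1 fixes the shape to be the decreasing rearrangement of the block sizes, so its $c$-th column has one cell per block of size at least $c$ and, before OS2, already contains precisely the $c$-th entries of those blocks; OS2 only sorts them. Since the entries of a standard Young tableau increase up each column, such a tableau is determined by the contents of its columns, so this description characterizes $\phi(p)$.

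The engine of the induction would be a restriction lemma: if $p^{(m)}\in\operatorname{OWP}_m$ denotes the partition obtained after inserting $1,2,\ldots,m$ in the box sorting algorithm, then $\phi(p^{(m)})=(\phi(p))_m=:T_m$. To prove this I would compare two operations. Passing from $p$ to $p^{(m)}$ deletes every entry exceeding $m$, which truncates each increasing block to its prefix of entries at most $m$; passing from $\phi(p)$ to $(\phi(p))_m$ deletes the entries exceeding $m$ from the tableau, which removes a top segment of each column. By the column description, both leave the $c$-th column equal to the set of $c$-th block-entries that are at most $m$, for every $c$; as both outputs are standard Young tableaux with identical columns, they coincide.

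With these tools in hand the induction should close cleanly. Restriction gives a map $\rho\colon\phi^{-1}(T)\to\phi^{-1}(T_{n-1})$, $p\mapsto p^{(n-1)}$, and every preimage of $T$ is obtained from a preimage $q$ of $T_{n-1}$ by inserting $n$ into one of the blocks $B_0,\ldots,B_{n-1}$ of $q$. Since $n$ is largest it is appended to whichever block receives it, and by the column description inserting it into a block of size $\ell$ places $n$ at the top of the $(\ell+1)$-st column while leaving $T_{n-1}$ otherwise intact; thus $\phi$ of the result equals $T$ exactly when $\ell+1$ is the column containing $n$ in $T$. If $n$ lies in column $k+1$ of $T$ with $k\ge1$, the admissible blocks are those of size $k$, whose number is $\operatorname{col}_k(T_{n-1})-\operatorname{col}_{k+1}(T_{n-1})$; if instead $n$ lies in the first column, the admissible blocks are the empty ones, of number $n-\operatorname{col}_1(T_{n-1})$. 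In both cases this count equals $\sigma_n(T)$ and is independent of $q$, so $\#\phi^{-1}(T)=\sigma_n(T)\,\#\phi^{-1}(T_{n-1})$; since $\sigma_i$ depends only on $T_i=(T_{n-1})_i$ for $i<n$, the induction yields \eqref{phiT}. I expect the main obstacle to be exactly the restriction lemma and this ``one insertion, one new column-top'' principle: a priori the global re-sorting of OS1--OS2 could scramble the link between the block that receives $n$ and the final cell of $n$, and the substance of the argument is to show, via the column description, that no such scrambling occurs.
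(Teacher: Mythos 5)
Your proof is correct and follows essentially the same route as the paper: both arguments reduce \eqref{cdnc-Thm} to \eqref{phiT} via Lemma~\ref{Lemma-cdnc} and the weight preservation \eqref{wp-SYT}, and both establish \eqref{phiT} by counting, for each entry $i$, the number of blocks into which $i$ can be inserted compatibly with $T$ (empty blocks when $i$ starts a new row in the first column, blocks of size exactly $k$ when $i$ lands in column $k+1$), these counts being exactly $\sigma_i(T)$. The only difference is organizational: the paper runs the count forward over $i=1,\ldots,n$ and invokes the multiplication principle, whereas you peel off the largest entry by induction; your column description of $\phi$ and the restriction lemma $\phi(p^{(m)})=(\phi(p))_m$ make explicit the compatibility between insertions and the sorting steps OS1--OS2 that the paper's terser proof leaves implicit.
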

\begin{proof}
In order to prove~\eqref{phiT}, we need to count the possible positions of each entry $i$ of $p\in\phi^{-1}(T)$. We distinguish two cases:
\begin{itemize}
  \item [$(i)$] Suppose that $i$ is the $r$-th entry in the first column of $T$. Then $r=\operatorname{col}_1(T_i)$. In $T_i$, the entry $i$ is maximum. By the box sorting algorithm, we see that there are $i-(r-1)$ ways to insert $i$, and each insertion generates an element of $\operatorname{OWP}_i$, see Table~\ref{tab:dummy-1} for illustrations.
  \item [$(ii)$] Suppose that $i$ is the $r$-th entry in the $(k+1)$-th column of $T$, where $k\geqslant 1$. Then $r=\operatorname{col}_{k+1}(T_i)$. In $T_i$, the entry $i$ is maximum.
 By box sorting algorithm, we find that there are $\operatorname{col}_k(T_i)-(r-1)$ ways to insert the entry $i$, and each insertion generates an element of $\operatorname{OWP}_i$.
\end{itemize}
Continuing in this way, we eventually recover all the elements in $\operatorname{OWP}_n$.
By the multiplication principle, we get~\eqref{phiT}.
Combining~\eqref{wp-SYT},~\eqref{phiT} and Lemma~\ref{Lemma-cdnc}, we arrive at~\eqref{cdnc-Thm}, and hence the proof is complete.
\end{proof}

\begin{theorem}\label{AnxSYT}
We have
\begin{equation*}\label{Anx-01}
A_n(x)=\sum_{T\in \SYT(n)}\left(\prod_{i=1}^n \sigma_i(T)\right)x^{\ell(\lambda(T))}.
\end{equation*}
\end{theorem}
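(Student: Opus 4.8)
The plan is to obtain Theorem~\ref{AnxSYT} as a one-line specialization of the coefficient identity~\eqref{cdnc-Thm}, followed by the recognition of the resulting polynomial as $A_n(x)$ through its defining recursion.

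First I would record that~\eqref{cdnc-Thm}, like Comtet's formula behind Proposition~\ref{prop02}, is an identity of polynomials in the \emph{independent} indeterminates $c,c_1,c_2,\dots$: the box sorting algorithm only bookkeeps which formal derivative is placed in which box, so no analytic relation among the $c_i$ is used. I therefore specialize $c=1$ and $c_i=x$ for every $i\ge 1$. Under this specialization the weight $w(T)=c^{\,n+1-\ell(\lambda(T))}\prod_{i=1}^n c_i^{w_i(T)}$ collapses to $x^{\sum_{i\ge 1}w_i(T)}=x^{\ell(\lambda(T))}$, since every row of $T$ has at least one box and hence $\sum_{i\ge 1}w_i(T)=\ell(\lambda(T))$. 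Thus the right-hand side of~\eqref{cdnc-Thm} becomes exactly $\sum_{T\in\SYT(n)}\bigl(\prod_{i=1}^n\sigma_i(T)\bigr)x^{\ell(\lambda(T))}$, the claimed expression. It remains to evaluate the common value of~\eqref{cdnc-Thm} under the same specialization.

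For this I would use the box sorting description of Lemma~\ref{Lemma-cdnc}, $(cD)^nc=\sum_{p\in\operatorname{OWP}_n}w(p)$ with $w(p)=\prod_{i\ge 0}c_i^{w_i(p)}$. After specializing, an empty block contributes $c_0=1$ and a block of any positive size contributes $x$, so $w(p)=x^{b(p)}$, where $b(p)$ is the number of nonempty blocks of $p$; this is consistent with the right-hand side, since $\phi$ deletes empty boxes and hence $b(p)=\ell(\lambda(\phi(p)))$. So the theorem reduces to proving $g_n(x):=\sum_{p\in\operatorname{OWP}_n}x^{b(p)}=A_n(x)$. To compute $g_n$ I run the box sorting algorithm and track $b$ as the elements $1,2,\dots,n$ are inserted in turn. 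Just before $i$ is inserted there are exactly $i$ boxes $B_0,\dots,B_{i-1}$, of which say $m$ are nonempty; inserting $i$ into one of the $m$ nonempty boxes keeps $b=m$, while inserting it into one of the $i-m$ empty boxes raises $b$ to $m+1$ (every such insertion is admissible, as $i$ exceeds every box index, while opening the fresh box $B_i$ never alters $b$). On generating polynomials this sends $x^m\mapsto m\,x^m+(i-m)x^{m+1}=ix\cdot x^m+x(1-x)\tfrac{d}{dx}x^m$, i.e.\ applies $P\mapsto ixP+x(1-x)P'$. Starting from $g_0(x)=1$ and iterating yields $g_n(x)=nx\,g_{n-1}(x)+x(1-x)\tfrac{d}{dx}g_{n-1}(x)$, which is precisely the recursion (and initial value $A_0=1$) for the Eulerian polynomials recorded just after~\eqref{Anx-poly-def}. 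Hence $g_n(x)=A_n(x)$, completing the proof.

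The step I expect to be the crux, and would verify most carefully, is the insertion analysis: that before inserting $i$ the number of boxes is always exactly $i$, that precisely $i-m$ of them are empty where $m=b$ is the running block count, and that the $i$ admissible insertions are in bijection with the algorithm's choices and produce distinct elements of $\operatorname{OWP}_i$. Once this bookkeeping is fixed, the operator $P\mapsto ixP+x(1-x)P'$ falls out mechanically and its match with the Eulerian recursion is immediate; everything else (the passage through $\phi$ and the identity $b(p)=\ell(\lambda(\phi(p)))$) is already furnished by Theorem~\ref{TSYTncol}.
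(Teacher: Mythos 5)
Your proposal is correct, and it takes a genuinely different route for the key identification step. Both your argument and the paper's start from the expansion~\eqref{cdnc-Thm} and end up performing the same specialization $c\mapsto 1$, $c_i\mapsto x$ ($i\geqslant 1$), under which $w(T)$ collapses to $x^{\ell(\lambda(T))}$. The difference is in how the specialized left-hand side is recognized as $A_n(x)$. The paper stays inside its grammar toolkit: it takes $c=y$ and $D=D_G$ for the grammar $G=\{x\to x,\,y\to x\}$, so that $c_i=D_G^i(y)=x$, and then identifies $(yD_G)^n(y)\big|_{y=1}=A_n(x)$ by citing Proposition~\ref{grammar03} (Dumont's grammar for the Eulerian polynomials, itself proved via a labeling of circular permutations). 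You instead observe, correctly, that~\eqref{cdnc-Thm} is a formal identity in independent symbols $c,c_1,c_2,\dots$ (the box sorting algorithm only bookkeeps Leibniz expansions, so arbitrary substitution is legitimate --- the paper implicitly relies on this same flexibility every time it replaces $\tfrac{\mathrm{d}}{\mathrm{d}x}$ by a grammar derivative), and then you prove directly that $\sum_{p\in\operatorname{OWP}_n}x^{b(p)}=A_n(x)$ by showing that the insertion dynamics of the box sorting algorithm reproduce the Eulerian recursion $A_n(x)=nxA_{n-1}(x)+x(1-x)A_{n-1}'(x)$; your bookkeeping of the $i$ available boxes ($m$ nonempty, $i-m$ empty) at step $i$ is accurate, as is the resulting operator $P\mapsto ixP+x(1-x)P'$. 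What each approach buys: the paper's proof is essentially two lines given its infrastructure, and it makes Theorems~\ref{AnxSYT}, \ref{thm-Cnxy}, \ref{cnx01} and \ref{cnx02} all instances of one template (``choose the right grammar''); yours is self-contained, avoids invoking Dumont's proposition altogether, and yields as a byproduct a clean standalone interpretation, namely that $A_n(x)$ is the generating polynomial of $\operatorname{OWP}_n$ by the number of nonempty blocks --- in effect a re-proof of the grammatical fact extracted directly from the combinatorial model.
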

\begin{proof}
 Let $G=\{x \rightarrow x, y \rightarrow x\}$.
Note that Proposition~\ref{grammar03} can also be restated as
$$(yD_{G})^{n}(y)|_{y=1}=A_n(x).$$
Taking $c=y$ in~\eqref{cdnc-Thm}, we get $c_i=D_{G}^i(c)=D_{G}^i(y)=x$ for $i\geqslant 1$.
It follows from~\eqref{cdnc-Thm} that
\begin{align*}
A_n(x)&=\sum_{T\in\operatorname{SYT}(n)}\left(\prod_{i=1}^n\sigma_i(T)c_i^{w_i(T)}\right)c^{n+1-\ell(\lambda(T))}{\bigg|}_{c=y=1,~c_i=x}\\
&= \sum_{T\in \SYT(n)} \left(\prod_{i=1}^n \sigma_i(T) \right) ~ x^{\sum_{i=1}^nw_i(T)}\\
&= \sum_{T\in \SYT(n)} \left(\prod_{i=1}^n \sigma_i(T) \right) ~ x^{\ell(\lambda(T))}.
\end{align*}
\end{proof}

\begin{theorem}\label{thm-Cnxy}
We have
$$C_n(x)=\sum_{T\in\operatorname{SYT}{(n)}}\prod_{i=1}^n\sigma_i(T){A_i(x)}^{w_i(T)}.$$
\end{theorem}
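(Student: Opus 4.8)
The plan is to specialize the master identity~\eqref{cdnc-Thm} of Theorem~\ref{TSYTncol}, not to the ordinary derivative $D=\mathrm{d}/\mathrm{d}x$, but to the formal derivative of the Eulerian grammar, choosing the base function so that the coefficients $c_i$ become the Eulerian polynomials $A_i(x)$ while the left-hand side becomes $C_n(x)$. The first thing I would record is that the box sorting algorithm, Lemma~\ref{Lemma-cdnc}, and hence the entire expansion in Theorem~\ref{TSYTncol} are purely formal: they use only that $D$ is a derivation obeying the Leibniz rule, together with the convention $c_k=D^kc$. Consequently~\eqref{cdnc-Thm} remains valid verbatim when $D$ is replaced by any context-free grammar derivative $D_G$ and $c_k$ is read as $D_G^k(c)$.

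With this in hand, I would take the Eulerian grammar $G=\{a\to ab,\ b\to ab\}$ of Proposition~\ref{grammar03}, set $D:=D_G$, and choose the base function $c:=b$. Then~\eqref{cdnc-Thm} reads
\[
(bD_G)^n b=\sum_{T\in\operatorname{SYT}(n)}\Bigl(\prod_{i=1}^n\sigma_i(T)\,\bigl(D_G^i b\bigr)^{w_i(T)}\Bigr)\,b^{\,n+1-\ell(\lambda(T))}.
\]
Proposition~\ref{grammar03} gives $D_G^i(b)=b^{i+1}A_i(a/b)$ for $i\geqslant 1$, so the substitution $a\mapsto x,\ b\mapsto 1$ sends $D_G^i(b)\mapsto A_i(x)$ and $b\mapsto 1$. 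The right-hand side therefore collapses to exactly $\sum_{T\in\operatorname{SYT}(n)}\prod_{i=1}^n\sigma_i(T)A_i(x)^{w_i(T)}$, which is the claimed expression. It then remains to evaluate the left-hand side under the same substitution.

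For the left-hand side the key step is an operator identity. A one-line check on monomials shows that both $b\,D_G$ and the second-order grammar derivative $D_{\widehat G}$, with $\widehat G=\{a\to ab^2,\ b\to ab^2\}$, send $a^ib^j$ to $i\,a^ib^{j+2}+j\,a^{i+1}b^{j+1}$ for all $i,j\geqslant 0$; hence $b\,D_G=D_{\widehat G}$ as operators on $\mathbb{C}[a,b]$, and so $(bD_G)^n(b)=D_{\widehat G}^n(b)$. Since $D_{\widehat G}(a)=D_{\widehat G}(b)=ab^2$, iterating gives $D_{\widehat G}^n(a)=D_{\widehat G}^n(b)$ for $n\geqslant 1$, and Example~\ref{ExG4} identifies this common value as $b^{2n+1}C_n(a/b)$. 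Evaluating at $a\mapsto x,\ b\mapsto 1$ yields $C_n(x)$. Comparing the two evaluations of the one quantity $(bD_G)^n(b)$ completes the proof.

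The point requiring the most care — what I expect to be the main obstacle — is the \emph{simultaneous} matching: one must find a single specialization of a single grammar for which $D_G^i(c)$ produces the Eulerian polynomials $A_i(x)$ and $(cD_G)^n c$ produces the second-order Eulerian polynomial $C_n(x)$. The choice $c=b$ (rather than $c=a$) is exactly what makes both the base factor $b^{\,n+1-\ell(\lambda(T))}$ and the coefficient factors $\bigl(D_G^i b\bigr)^{w_i(T)}$ specialize correctly under $a\mapsto x,\ b\mapsto 1$; and the operator coincidence $b\,D_G=D_{\widehat G}$, which converts the iterated operator $(bD_G)^n$ into the known second-order grammar so that Example~\ref{ExG4} applies, is the technical heart of the argument. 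Everything else reduces to routine substitution.
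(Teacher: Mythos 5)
Your proposal is correct and follows essentially the same route as the paper: specialize the master expansion~\eqref{cdnc-Thm} to the Eulerian grammar of Proposition~\ref{grammar03} with base letter $c=b$ (the paper writes it as $G=\{x\to xy,\ y\to xy\}$ with $c=y$), identify $c_i$ with $A_i(x)$ via Proposition~\ref{grammar03}, and identify $(cD_G)^n c$ with $C_n(x)$ via Example~\ref{ExG4}. The only difference is that you explicitly verify the operator identity $b\,D_G=D_{\widehat G}$ on monomials, a step the paper leaves implicit when it asserts~\eqref{xyCn02}, so your writeup is if anything slightly more complete.
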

\begin{proof}
From Example~\ref{ExG4}, we see that if $G=\{x\rightarrow xy,~y\rightarrow xy\}$, then
\begin{equation}\label{xyCn02}
(yD_{G})^n(y)=y^{2n+1}C_n\left(\frac{x}{y}\right).
\end{equation}
Taking $c=y$, by Proposition~\ref{grammar03}, we see that $c_i|_{y=1}=D_{G}^i(y)|_{y=1}=A_i(x)$ for $i\geqslant 1$.
By~\eqref{cdnc-Thm}, we find the desired formula. This completes the proof.
\end{proof}

An illustration of Theorem~\ref{thm-Cnxy} is given by Table~\ref{tab:dummy-c3xy}.
\renewcommand{\arraystretch}{2}
\begin{center}
\begin{table}[ht!]
  \caption{The computation of $C_3(x)=x+8x^2+6x^3$.}\label{tab:dummy-c3xy}
  {
\begin{tabular}{ccccc|c}
$T$&\quad\quad\quad\quad${\sigma_i(T)},~w_i(T)$&~~~~~~\quad\quad enumerator\\
\hline
    \begin{ytableau}
    *(gray!20) 3\\
    *(gray!20) 2\\
    *(gray!20) 1\\
    *(gray!20) 0 & 1 & 2 &3  \\
    \end{ytableau}
    & $\Longrightarrow$
    &
   $\substack{\sigma_1(T)=\sigma_2(T)=\sigma_3(T)=1\\ w_1(T)=w_2(T)=0,~w_3(T)=1}$
    & &
    &$x+4x^2+x^3$\\
\hline
    \begin{ytableau}
    *(gray!20) 3\\
    *(gray!20) 2\\
    *(gray!20) 1 & 2\\
    *(gray!20) 0 & 1 & 3  \\
    \end{ytableau}
    & $\Longrightarrow$
    &
  $\substack{\sigma_1(T)=\sigma_2(T)=1,~\sigma_3(T)=2\\w_1(T)=w_2(T)=1,~w_3(T)=0}$
    &
    &
    &$2x(x+x^2)$\\
\hline
    \begin{ytableau}
    *(gray!20) 3\\
    *(gray!20) 2\\
    *(gray!20) 1 & 3\\
    *(gray!20) 0 & 1 & 2  \\
    \end{ytableau}
    & $\Longrightarrow$
    &
   $\substack{\sigma_1(T)=\sigma_2(T)=1,~\sigma_3(T)=2\\w_1(T)=w_2(T)=1,~w_3(T)=0}$
    &
    &
    &$2x(x+x^2)$\\
\hline
    \begin{ytableau}
    *(gray!20) 3\\
    *(gray!20) 2 & 3\\
    *(gray!20) 1 & 2\\
    *(gray!20) 0 & 1\\
    \end{ytableau}
    & $\Longrightarrow$
    &
  $\substack{\sigma_1(T)=\sigma_2(T)=\sigma_3(T)=1\\w_1(T)=3,~w_2(T)=w_3(T)=0}$
    &
    &
    &$x^3$\\
    \hline
\end{tabular}}
\end{table}
\end{center}

From Example~\ref{ExG4}, one can easily verify that 
\begin{equation}\label{xyCn}
(xD_{G})^n(x)=y^{2n+1}C_n\left(\frac{x}{y}\right),~\text{where $G=\{x\rightarrow y^2,~y\rightarrow y^2\}$.}
\end{equation}

\begin{theorem}\label{cnx01}
We have
$$C_n(x)=\sum_{T\in\operatorname{SYT}{(n)}}\left(\prod_{i=1}^n\sigma_i(T){i!}^{w_i(T)}\right)x^{n+1-\ell(\lambda(T))}.$$
\end{theorem}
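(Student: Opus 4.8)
The plan is to specialize the master expansion \eqref{cdnc-Thm} of Theorem~\ref{TSYTncol} to the grammar recorded in \eqref{xyCn}, exactly as Theorems~\ref{AnxSYT} and~\ref{thm-Cnxy} specialize it to the grammars of Examples~\ref{ex-Eul} and~\ref{ExG4}. The identity \eqref{cdnc-Thm} was derived purely from the box sorting algorithm, which invokes only the Leibniz rule; hence it stays valid when $D$ is replaced by any formal derivative $D_G$ and $c$ by any generator of the alphabet. Here I would take $G=\{x\rightarrow y^2,\ y\rightarrow y^2\}$ and choose $c=x$, so that the left-hand side of \eqref{cdnc-Thm} becomes $(xD_G)^n(x)$, which by \eqref{xyCn} equals $y^{2n+1}C_n(x/y)$.

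Next I would compute the iterated derivatives $c_i=D_G^i(x)$. One has $c_1=D_G(x)=y^2$, and a straightforward induction gives $c_i=i!\,y^{i+1}$ for every $i\geqslant 1$: if $c_i=i!\,y^{i+1}$, then $c_{i+1}=D_G(i!\,y^{i+1})=i!\,(i+1)y^i D_G(y)=(i+1)!\,y^{i+2}$. Substituting $c=x$ and $c_i=i!\,y^{i+1}$ into \eqref{cdnc-Thm} yields
\begin{equation*}
(xD_G)^n(x)=\sum_{T\in\SYT(n)}\left(\prod_{i=1}^n\sigma_i(T)(i!)^{w_i(T)}\right)y^{\sum_{i=1}^n(i+1)w_i(T)}x^{n+1-\ell(\lambda(T))}.
\end{equation*}
The one genuine bookkeeping step is the exponent of $y$: since $\sum_{i=1}^n i\,w_i(T)=n$ and $\sum_{i=1}^n w_i(T)=\ell(\lambda(T))$, one obtains $\sum_{i=1}^n(i+1)w_i(T)=n+\ell(\lambda(T))$.

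Finally, combining this with \eqref{xyCn} gives
\begin{equation*}
y^{2n+1}C_n\!\left(\frac{x}{y}\right)=\sum_{T\in\SYT(n)}\left(\prod_{i=1}^n\sigma_i(T)(i!)^{w_i(T)}\right)y^{n+\ell(\lambda(T))}x^{n+1-\ell(\lambda(T))},
\end{equation*}
and I would record the consistency check that both sides are homogeneous of degree $2n+1$ (each summand on the right has total degree $(n+\ell(\lambda(T)))+(n+1-\ell(\lambda(T)))=2n+1$). Setting $y=1$ then collapses the left-hand side to $C_n(x)$ and annihilates every power of $y$ on the right, producing precisely the claimed formula. I do not expect a real obstacle: the argument is a direct specialization, and the only place demanding care is to keep $y$ as a homogenizing variable throughout—rather than setting $y=1$ prematurely, as was done in the proof of Theorem~\ref{thm-Cnxy}—so that the constants $i!$ generated by $D_G^i(x)$ are cleanly separated from the powers of $x$ tracked by the exponent $n+1-\ell(\lambda(T))$.
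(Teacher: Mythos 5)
Your proof is correct and takes essentially the same route as the paper's: both specialize \eqref{cdnc-Thm} to the grammar $G=\{x\rightarrow y^2,\ y\rightarrow y^2\}$ of \eqref{xyCn} with $c=x$, compute $c_i=D_G^i(x)=i!\,y^{i+1}$, and set $y=1$. Your write-up is merely more explicit about the induction for $c_i$ and the $y$-exponent bookkeeping, which the paper compresses into a single substitution.
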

\begin{proof}
Note that
$C_n(x)=(xD_{G})^n(x){|}_{y=1}$, where $G=\{x\rightarrow y^2,~y\rightarrow y^2\}$.
Taking $c=x$, then $c_i=D_{G}^i(c)=D_{G}^i(x)=i!y^{i+1}$ for $i\geqslant 1$.
By~\eqref{cdnc-Thm}, we get
\begin{equation*}
C_n(x)=\sum_{T\in\operatorname{SYT}{(n)}}\left(\prod_{i=1}^n\sigma_i(T)c_i^{w_i(T)}\right)c^{n+1-\ell(\lambda(T))}{\bigg|}_{c=x,~c_{i}=i!y^{i+1},~y=1},
\end{equation*}
which yields the desired result. This completes the proof.
\end{proof}

Let $\SYT(n;k)$ be the subset of $\SYT(n)$ with at most $k$ columns.
\begin{theorem}\label{cnx02}
For the trivariate second-order Eulerian polynomials, we have
$$C_{n+1}(x,y,z)=\sum_{T\in\operatorname{SYT}{(n;3)}}\prod_{i=1}^n\sigma_i(T){c_1}^{w_1(T)}{c_2}^{w_2(T)}6^{w_3(T)}
(xyz)^{n+1-\ell(\lambda(T))},$$
where $c_1=xy+yz+xz$ and $c_2=2x+2y+2z$.
\end{theorem}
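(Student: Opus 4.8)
The plan is to specialize the master identity~\eqref{cdnc-Thm} of Theorem~\ref{TSYTncol}, taking the base function to be $c=xyz$ and the derivation $D$ to be the multivariable operator $D=\frac{\partial}{\partial x}+\frac{\partial}{\partial y}+\frac{\partial}{\partial z}$. The derivation of~\eqref{cdnc-Thm} through the box sorting algorithm (Lemma~\ref{Lemma-cdnc}) uses only the Leibniz rule together with the relation $c_k=D^kc$; consequently the identity remains valid verbatim for any derivation $D$ on a commutative algebra, exactly as the grammar derivative $D_G$ was substituted for $\frac{\mathrm{d}}{\mathrm{d}x}$ in the proofs of Theorems~\ref{AnxSYT},~\ref{thm-Cnxy} and~\ref{cnx01}. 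Since $\frac{\partial}{\partial x}+\frac{\partial}{\partial y}+\frac{\partial}{\partial z}$ is a genuine derivation on $\mathbb{R}[x,y,z]$, no justification beyond this observation is needed.

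First I would identify $(cD)^nc$ with $C_{n+1}(x,y,z)$. Because $C_1(x,y,z)=xyz=c$, Dumont's relation~\eqref{Dumont80} reads $C_{m+1}(x,y,z)=(cD)C_m(x,y,z)$, so iterating gives $C_{n+1}(x,y,z)=(cD)^nC_1(x,y,z)=(cD)^nc$. Next I would compute the iterated derivatives $c_i=D^i(xyz)$ explicitly: one finds $c_1=xy+yz+xz$, $c_2=2(x+y+z)$, $c_3=6$, and $c_i=0$ for every $i\geqslant 4$. These are precisely the quantities $c_1$, $c_2$ and the constant $6$ appearing in the statement.

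Substituting these into~\eqref{cdnc-Thm} yields
\[
C_{n+1}(x,y,z)=\sum_{T\in\operatorname{SYT}(n)}\left(\prod_{i=1}^n\sigma_i(T)c_i^{w_i(T)}\right)(xyz)^{n+1-\ell(\lambda(T))}.
\]
The remaining step is the truncation of the index set. Since $c_i=0$ for $i\geqslant 4$, any tableau $T$ possessing a row with at least four boxes contributes a factor $c_i^{w_i(T)}=0$ for some such $i$ and hence drops out of the sum. A standard Young tableau has no row of length exceeding three precisely when its shape has at most three columns, that is, when $T\in\operatorname{SYT}(n;3)$; restricting the sum accordingly and inserting $c_1=xy+yz+xz$, $c_2=2(x+y+z)$, $c_3=6$ produces the claimed formula.

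The only genuinely delicate points here are bookkeeping ones, so I would not expect a real obstacle. The main thing to get right is the equivalence between the vanishing of $c_i$ for $i\geqslant 4$ and the column restriction $\operatorname{SYT}(n;3)$: in the French convention used throughout, the number of columns of $\lambda$ equals its largest part $\lambda_1$, so ``at most three columns'' is the same as ``every row has at most three boxes'', which is exactly the condition $w_i(T)=0$ for all $i\geqslant 4$. As a sanity check I would verify the cases $n=1,2$ against the polynomials $C_2(x,y,z)$ and $C_3(x,y,z)$ listed in the introduction, confirming that the two tableaux of size $2$ give $c_2(xyz)^2$ and $c_1^2(xyz)$, whose sum is indeed $C_3(x,y,z)$.
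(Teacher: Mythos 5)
Your proposal is correct and is essentially the paper's own proof: the paper realizes your derivation $\frac{\partial}{\partial x}+\frac{\partial}{\partial y}+\frac{\partial}{\partial z}$ as the grammar $G=\{x\rightarrow 1,\,y\rightarrow 1,\,z\rightarrow 1\}$, identifies $(xyzD_G)^n(xyz)=C_{n+1}(x,y,z)$ via~\eqref{DGxyz} (equivalently Dumont's relation, as you do), computes the same values $c_1=xy+yz+xz$, $c_2=2x+2y+2z$, $c_3=6$, $c_i=0$ for $i\geqslant 4$, and substitutes into~\eqref{cdnc-Thm}. Your added justifications --- that~\eqref{cdnc-Thm} is valid for any derivation and that the vanishing of $c_i$ for $i\geqslant 4$ forces the truncation to $\operatorname{SYT}(n;3)$ --- merely make explicit what the paper leaves implicit.
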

\begin{proof}
It follows from~\eqref{DGxyz} that
$(xyzD_{G})^n(xyz)=C_{n+1}(x,y,z)$, where $$G=\{x\rightarrow 1,~y\rightarrow 1, z\rightarrow 1\}.$$
Setting $c=xyz$, we get that $c_1=xy+yz+xz$, $c_2=2x+2y+2z$, $c_3=D_{G}(2x+2y+2z)=6$,
and $c_i=0$ for $i\geqslant 4$.
Substituting $c=xyz,~c_{1}=xy+yz+xz,~c_{2}=2x+2y+2z$, $c_3=6$, and $c_i=0$ for $i\geqslant 4$ into the following expression:
\begin{equation*}
C_{n+1}(x,y,z)=\sum_{T\in\operatorname{SYT}{(n)}}\left(\prod_{i=1}^n\sigma_i(T)c_i^{w_i(T)}\right)
c^{n+1-\ell(\lambda(T))},
\end{equation*}
we obtain the desired formula. This completes the proof.
\end{proof}

We end our paper by giving an expression of the type $B$ Eulerian polynomials.
\begin{theorem}
Let $B_n(x)$ be the type $B$ Eulerian polynomials. We have
$$B_n(x)=\sum_{T\in\operatorname{SYT}{(n)}}\left(\prod_{i=1}^n\sigma_i(T)c_i^{w_i(T)}\right)x^{{\frac{1}{2}}\left(n-\ell(\lambda(T))\right)},$$
where $c_{2i-1}=4^{i-1}(1+x)$ and $c_{2i}=4^i\sqrt{x}$ for $i\geqslant 1$.
\end{theorem}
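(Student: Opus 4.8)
The plan is to derive the identity from the master formula~\eqref{cdnc-Thm}, exactly in the spirit of the proofs of Theorems~\ref{AnxSYT},~\ref{thm-Cnxy} and~\ref{cnx01}: choose a grammar $G$ and a base function $c$ whose iterated derivatives reproduce the prescribed coefficients $c_i$. Reading off the target values $c_{2i-1}=4^{i-1}(1+x)$ and $c_{2i}=4^i\sqrt{x}$, I would introduce the two-letter grammar $G=\{u\rightarrow v,~v\rightarrow 4u\}$, set $c=u$, and record the specialization $u=\sqrt{x}$, $v=1+x$ to be performed at the end.

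The first step is routine: a one-line induction on $i$ gives $D_G^{2i}(u)=4^i u$ and $D_G^{2i-1}(u)=4^{i-1}v$, so that after the specialization $c_{2i}=4^i\sqrt{x}$ and $c_{2i-1}=4^{i-1}(1+x)$, which are precisely the coefficients in the statement.

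The crux --- and the step I expect to be the main obstacle --- is the grammatical identity $(uD_G)^n(u)=\sqrt{x}\,B_n(x)$ after specialization, the type $B$ analogue of~\eqref{xyCn02} and~\eqref{xyCn}. The difficulty is that the formal derivative $D_G$ does \emph{not} agree with $\tfrac{\mathrm{d}}{\mathrm{d}x}$ under the specialization (indeed $\tfrac{\mathrm{d}}{\mathrm{d}x}\sqrt{x}=\tfrac{1}{2\sqrt{x}}$, whereas $D_G(u)$ specializes to $1+x$), so the two cannot simply be interchanged. I would prove the identity by induction on $n$, using as structural input that $Q_n:=(uD_G)^n(u)$ is homogeneous of degree $n+1$ in $u,v$ (since $D_G$ preserves degree and the outer multiplication by $u$ raises it by one). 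Writing $s$ for the specialization, $R=s(Q_n)$ and $R'=\tfrac{\mathrm{d}}{\mathrm{d}x}R$, I would combine Euler's relation $u\,\partial_u Q_n+v\,\partial_v Q_n=(n+1)Q_n$ with the chain rule $R'=\tfrac{1}{2\sqrt{x}}\,s(\partial_u Q_n)+s(\partial_v Q_n)$ to solve for $s(\partial_u Q_n)$ and $s(\partial_v Q_n)$ in terms of $R$ and $R'$. Substituting these into $s(Q_{n+1})=\sqrt{x}\,\bigl[(1+x)\,s(\partial_u Q_n)+4\sqrt{x}\,s(\partial_v Q_n)\bigr]$ collapses to
$$s(Q_{n+1})=2x(n+1)R+2x(1-x)R'.$$
Feeding in $R=\sqrt{x}\,B_n(x)$ and simplifying, this reproduces exactly the type $B$ recurrence $B_{n+1}(x)=(1+(2n+1)x)B_n(x)+2x(1-x)B_n'(x)$, which I would in turn extract from~\eqref{Bnxsum}; with base case $Q_0=u$, $s(Q_0)=\sqrt{x}=\sqrt{x}\,B_0(x)$, this closes the induction.

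Finally I would assemble the pieces. Applying~\eqref{cdnc-Thm} with $c=u$ and $D=D_G$ gives $(uD_G)^n(u)=\sum_{T\in\SYT(n)}\bigl(\prod_{i=1}^n\sigma_i(T)\,c_i^{w_i(T)}\bigr)u^{n+1-\ell(\lambda(T))}$; specializing $u=\sqrt{x}$, inserting the $c_i$ from the first step, and dividing both sides by $\sqrt{x}$ (legitimate by the grammatical identity, which also supplies the left-hand side $B_n(x)$) converts $u^{n+1-\ell(\lambda(T))}=x^{(n+1-\ell(\lambda(T)))/2}$ into $x^{(n-\ell(\lambda(T)))/2}$ and yields the claimed formula. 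The only genuinely new ingredient is the homogeneity-based induction; everything else is bookkeeping on top of~\eqref{cdnc-Thm}.
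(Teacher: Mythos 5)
Your proof is correct, and its skeleton --- apply the master formula \eqref{cdnc-Thm} to a grammar whose iterated derivatives realize the prescribed $c_i$ --- is the same as the paper's; the genuine difference lies in how the bridge to $B_n(x)$ is established. The paper takes the grammar $G=\{x\rightarrow y,\ y\rightarrow x\}$ with $c=xy$ and gets the key identity $(xyD_G)^n(xy)\vert_{y=1}=xB_n(x^2)$ essentially for free from Example~\ref{ex-Bn}, where this grammar is realized analytically as $x\,\mathrm{d}/\mathrm{d}x$ acting on $a=x/\sqrt{1-x^2}$, $b=1/\sqrt{1-x^2}$, so that \eqref{Bnxsum} applies directly; it then computes $c_{2i-1}=4^{i-1}(x^2+y^2)$, $c_{2i}=4^ixy$, sets $y=1$, and concludes by the cosmetic relabeling $x^2\mapsto x$. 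Your grammar $\{u\rightarrow v,\ v\rightarrow 4u\}$ is exactly the change of grammar $u=xy$, $v=x^2+y^2$ of the paper's, but since your specialization $u=\sqrt{x}$, $v=1+x$ is not compatible with realizing $D_G$ as $\mathrm{d}/\mathrm{d}x$ --- the subtlety you correctly flagged --- you instead prove $s\bigl((uD_G)^n(u)\bigr)=\sqrt{x}\,B_n(x)$ by induction, using Euler's relation for the homogeneous $Q_n$ to solve for $s(\partial_u Q_n)$ and $s(\partial_v Q_n)$ in terms of $R$ and $R'$, and closing with the type $B$ recurrence $B_{n+1}(x)=(1+(2n+1)x)B_n(x)+2x(1-x)B_n'(x)$ extracted from \eqref{Bnxsum}. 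I checked the collapse: $s(Q_{n+1})=2x(n+1)R+2x(1-x)R'$ is right, and with $R=\sqrt{x}\,B_n(x)$ it reproduces exactly that recurrence, so the induction closes. What the paper's route buys is brevity: one citation of Example~\ref{ex-Bn} replaces your entire induction, and by working with $xB_n(x^2)$ in the two variables $(x,y)$ it never meets $\sqrt{x}$ until the final substitution. What your route buys is self-containedness and a reusable device: the homogeneity/Euler argument lets one verify a grammatical identity under any algebraically convenient specialization, without having to find (or already know) a differential-operator realization of the grammar.
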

\begin{proof}
From~\eqref{DG3ab}, we see that
$xB_n(x^2)=(xyD_{G})^n(xy){|}_{y=1}$, where $G=\{x\rightarrow y,~y\rightarrow x\}$.
Taking $c=xy$, we notice that $$c_{2i-1}=D_{G}^{2i-1}(c)=4^{i-1}(x^2+y^2),~c_{2i}=D_{G}^{2i}(c)=4^ixy~{\text{for $n\geqslant 1$}}.$$
By~\eqref{cdnc-Thm}, we find that
\begin{align*}
xB_n(x^2)&=\sum_{T\in\operatorname{SYT}{(n)}}\left(\prod_{i=1}^n\sigma_i(T)c_i^{w_i(T)}\right)c^{n+1-\ell(\lambda(T))}{\bigg|}_{\substack{c_{2i-1}=4^{i-1}(x^2+y^2),~c_{2i}=4^ixy,\\y=1}}\\
&=\sum_{T\in\operatorname{SYT}{(n)}}\left(\prod_{i=1}^n\sigma_i(T)c_i^{w_i(T)}\right)x^{n+1-\ell(\lambda(T))}{\bigg|}_{c_{2i-1}=4^{i-1}(1+x^2),~c_{2i}=4^ix},
\end{align*}
which yields the desired result.
\end{proof}
\bibliographystyle{amsplain}

\end{document}